\documentclass[11pt, twoside]{article}
\usepackage{amsfonts,amssymb,amsmath,amsthm}
\usepackage{authblk}
\usepackage{graphicx}
\usepackage[all]{xy}
\usepackage{tikz}
\usetikzlibrary{arrows.meta}
\usepackage{changepage}
\usepackage{psfrag,xmpmulti,amscd,color,pstricks, import,enumerate}
\usepackage[normalem]{ulem}

 \divide\oddsidemargin by 2
\newtheorem{thm}{Theorem}[section]

\newtheorem{lem}[thm]{Lemma}

\newtheorem{claim}[thm]{Claim}
\newtheorem*{thmA}{Theorem A}
\newtheorem*{thmB}{Theorem B}

\newtheorem*{thmC}{Theorem C}
\newtheorem*{thmD}{Theorem D}
\newtheorem*{thmE}{Theorem E}

\theoremstyle{definition}

\newtheorem{rem}[thm]{Remark}
\newtheorem*{rem*}{Remark}
\newtheorem*{rems*}{Remarks}

\newtheorem*{ex*}{Example}

\numberwithin{equation}{section}

\definecolor{OrangeRed}{cmyk}{0,0.6,1,0}            
\definecolor{DarkBlue}{cmyk}{1,1,0,0.20}
\definecolor{DarkGreen}{cmyk}{1,0,0.6,0.2}
\definecolor{myblue}{rgb}{0.66,0.78,1.00}
\definecolor{Violet}{cmyk}{0.79,0.88,0,0}
\definecolor{Lavender}{cmyk}{0,0.48,0,0}
\definecolor{purpleheart}{rgb}{0.41, 0.21, 0.61}
\definecolor{brick}{cmyk}{0,0.8,0.3,0.5}

\renewcommand{\AA}{{\cal A}}

\newcommand{\D}{{\mathbb D}}
\newcommand{\Hyp}{{\mathbb H}}

\newcommand{\N}{{\mathbb N}}

\newcommand{\R}{{\mathbb R}}

\newcommand{\la}{\lambda}

\newcommand{\eps}{\varepsilon}

\renewcommand{\epsilon}{\varepsilon}
\renewcommand{\epsilon}{\varepsilon}
\renewcommand{\phi}{\varphi}

\vspace{5cm}

\title{Non-autonomous dynamics of inner functions: mixing and entropy}

\author[3]{Vasiliki Evdoridou \textsuperscript{\textdagger}} 
\author[1,2]{N\'uria Fagella \textsuperscript{\textdagger} \thanks{ Grant PID2023-147252NB-I00 from Agencia Estatal de Investigaci\'on, the Severo Ochoa and María de Maeztu
Program for Centers and Units of Excellence in R\&D (CEX2020-001084-M), and ICREA Academia 2020. }}
\author[3]{\hspace{2cm} Philip J. Rippon\textsuperscript{ \textdaggerdbl }}
\author[3]{Gwyneth M. Stallard\textsuperscript{ \textdaggerdbl }}
\affil[1]{\small Dep. de Matem\`atiques i Inform\`atica, Universitat de Barcelona, Catalonia, Spain.}
\affil[2]{\small Centre de Recerca Matemàtica, Bellaterra, Catalonia, Spain.}
\affil[3]{\small School of Mathematics and Statistics, The Open University, Milton Keynes, UK.}
\date{\today}

\begin{document}

\maketitle
\begin{abstract}
We generalise a mixing result due to Pommerenke for non-autonomous dynamical systems of forward compositions of the boundary values of centred inner functions; the proof depends on a precise bound on the orbits of such forward compositions in~$\D$. This enables us to give new criteria for such systems to have positive metric entropy, lower entropy and topological entropy. In the opposite direction, we construct a forward composition of centred degree two Blaschke products with topological entropy zero.
\end{abstract}

\section{Introduction}
An \textit{inner function} is a function analytic in the unit disc $\D=\{z:|z|<1\}$ with non-tangential boundary values of modulus~1 almost everywhere on $\partial\D$. It is \textit{centred} if $0$ is a fixed point. Throughout the paper we denote the non-tangential boundary values of an inner function~$f$ by the same letter and we use the notation $|E|$, where $E \subset \partial \D$, to denote the Lebesgue one-dimensional measure of~$E$.

The following `strong mixing' result for the boundary values of sequences of centred inner functions is due to Pommerenke~\cite[Theorem~1 and Lemma~3]{pommerenke-ergodic}.

\begin{thm} \label{thm:Pom3}
Let $F_n = f_n \circ \cdots \circ f_1$, where $f_n$, $n\in \N$, are centred inner functions such that $|f'_n(0)|\le \lambda$ where $1/2\le\lambda<1.$ Then there is an absolute constant~$K$ such that
\begin{equation}\label{eq:expmixing}
\left| \frac{|A \cap F_n^{-1}(E)|}{|E|}- \frac{|A|}{2\pi}\right| \leq K c^n,\;\;\text{for }n\ge 1, \quad\text{where } c=\exp \left(-(1-\lambda)/84\right),
\end{equation}
for all arcs $A$ and measurable sets $E$ in $\partial \D$ with $|E|>0$.

If $F_n(z)\to 0$ as $n\to\infty$, for all $z\in\D$, where $F_n$, $n\in \N$, are centred inner functions, then an inequality of the form \eqref{eq:expmixing} holds, with $o(1)$ as the upper bound.
\end{thm}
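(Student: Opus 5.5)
The plan is to study the boundary measure $E\mapsto |A\cap F_n^{-1}(E)|$ through harmonic measure and Poisson integrals. The starting point is Löwner's lemma in its standard extended form. If $g$ is an inner function with $g(0)=w$, then for every measurable $E\subset\partial\D$ we have $|g^{-1}(E)| = \int_E P_w\,|d\zeta|$, where $P_w(\zeta)=(1-|w|^2)/|\zeta-w|^2$ is the Poisson kernel. For centred $g$ this says Lebesgue measure is invariant.

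The arc $A$ is handled by smoothing. We approximate $\chi_A$ from above and below by smooth, or harmonic, functions. It is cleanest to write $\chi_A$ via its Poisson extension $u_A$ and to use the identity
\[
\int_{F_n^{-1}(E)} \chi_A \,|dz| \;=\; \int_E \mathcal{T}_n \chi_A \,|d\zeta|,
\]
where $\mathcal{T}_n$ is the transfer (Aleksandrov--Clark) operator of $F_n$. For a smooth function $\phi$, the pushforward $\mathcal{T}_n\phi$ is the boundary value of a function whose deviation from its mean $\frac1{2\pi}\int\phi$ is controlled by how close the measures $\mu_{F_n,\alpha}$ are to Lebesgue measure. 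The quantitative content is therefore a bound of the form
\[
\bigl\| \mathcal{T}_n\phi - \tfrac{1}{2\pi}\textstyle\int \phi \bigr\|_\infty \le C\,\|\phi\|_{C^1}\, c^n .
\]

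To obtain this bound, decompose $\phi$ into Fourier modes $\zeta^k$ and note that $\mathcal{T}_n(\zeta^k)$ is governed by the Fourier coefficients of $F_n^k$, or equivalently of the Clark measures. For a centred inner function $f$ with $|f'(0)|\le\lambda$, Schwarz's lemma gives $|f(z)|\le |z|\,\frac{|z|+\lambda}{1+\lambda|z|}$. Iterating this along the forward composition shows that $|F_n(z)|$ decays geometrically on a fixed disc, say $|z|\le r_0$. Combining this with Cauchy estimates, the non-constant Fourier coefficients of the Clark measures of $F_n$ decay like $c^n$, up to a polynomial factor in $k$ that the smoothness of $\phi$ absorbs. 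The main obstacle is getting the explicit constant $c=\exp(-(1-\lambda)/84)$ with $K$ absolute. That requires a precise orbit estimate of the form $|F_n(z)|\le C(|z|)\,e^{-n(1-\lambda)/a}$ that is uniform enough to survive the Cauchy estimate on a circle of fixed radius. I would prove it by splitting into the regime $|F_k(z)|\ge 1/2$, where each step contracts the hyperbolic distance to $0$ by a definite amount of order $1-\lambda$, and the regime $|F_k(z)|<1/2$, where the factor $(|z|+\lambda)/(1+\lambda|z|)\le (1+2\lambda)/(2+\lambda)$ gives contraction directly.

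Finally, we pass from $C^1$ test functions to the arc $A$. Sandwich $\phi_-\le\chi_A\le\phi_+$ with $\int(\phi_+-\phi_-)\le\delta$ and $\|\phi_\pm\|_{C^1}\lesssim 1/\delta$, then choose $\delta\approx c^{n/2}$, adjusting the constant in the exponent. Dividing by $|E|$ finishes the argument, since the error term $\int_E|\mathcal{T}_n\phi-\text{mean}|$ is at most $|E|$ times the sup bound.

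For the second statement, the same argument works with the geometric rate replaced by the hypothesis $F_n(z)\to0$. The harmonic functions $\mathcal{T}_n\phi - \text{mean}$ evaluated at interior points are controlled by $|F_n|$ on a fixed disc, which tends to $0$ by Vitali's theorem. This yields an $o(1)$ bound, uniform in $E$ but dependent on $A$ through the smoothing.
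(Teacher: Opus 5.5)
There is a genuine gap: the step from geometric decay of $F_n$ on a \emph{fixed} disc $|z|\le r_0$ to Fourier bounds with only a polynomial loss in $k$ fails. For $k\ge1$, the coefficient $\hat\mu_\alpha(k)=\int\bar\zeta^k\,d\mu_\alpha(\zeta)$ of the Clark measure $\mu_\alpha$ of $F_n$ is the $k$-th Taylor coefficient of $\sum_{j\ge1}\bar\alpha^jF_n^j$. A Cauchy estimate on $|z|=r_0$ therefore gives $|\hat\mu_\alpha(k)|\le 2M_nr_0^{-k}$, where $M_n=\max_{|z|=r_0}|F_n(z)|$, once $M_n\le 1/2$. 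The loss $r_0^{-k}$ is exponential in $k$, and no smoothness of $\phi$ absorbs it, because a majorant or minorant of $\chi_A$ with $L^1$-error $\delta$ must carry frequencies up to about $1/\delta$. Balancing $\delta$ against $c^nr_0^{-1/\delta}$ gives an error of order $1/n$, not $Kc^n$.

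This is not a weakness of the bookkeeping. Take $G(z)=z^m$, with $m$ of order $n$ chosen so that $r_0^m\le c^n$; then $G$ decays like $c^n$ on the fixed disc. Yet its Clark measures are point masses $1/m$ at the $m$-th roots of $\alpha$, and the left-hand side of \eqref{eq:expmixing} is of order $1/m$ for a suitable arc $A$ of length $\pi/m$ and a small arc $E$. So no argument using only information on a fixed disc can give the exponential rate. The target you set yourself, an orbit estimate ``uniform enough to survive the Cauchy estimate on a circle of fixed radius'', is the wrong one. The explicit constant $84$ is not the real obstacle; any faster rate would also prove the statement.

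The missing idea is the one the paper stresses after Theorem~\ref{lem:Pom3}. You need an orbit estimate on all of $\D$ whose blow-up as $|z|\to1$ is only a power of $1/(1-|z|)$, such as $|F_n(z)|\le\lambda^n(1-|z|)^{-13}$. It is applied on circles $|z|=r_n$ with $1-r_n$ exponentially small in $n$. Pommerenke's deduction, reproduced for Theorem~A with $1-r_n=(\lambda_n\cdots\lambda_1)^{3\delta}$, then needs no Fourier analysis. By the equality case of L\"owner's lemma and Fubini, $\int_{F_n^{-1}(E)}\omega(r_n\zeta,A_n^\pm,\D)\,|d\zeta|$ is an integral over $A_n^\pm$ of $\omega(F_n(r_nz),E,\D)=\frac{|E|}{2\pi}\bigl(1+O(|F_n(r_nz)|)\bigr)$. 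Meanwhile the Poisson kernel at radius $r_n$ is at most $3\eps_n$ outside an $\eps_n$-neighbourhood, $\eps_n=(1-r_n)^{1/3}$. Hence $\omega(r_n\zeta,A_n^+,\D)\ge1-3\eps_n$ on $A$ and $\omega(r_n\zeta,A_n^-,\D)\le3\eps_n$ off $A$.

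Your regime-splitting sketch would in fact give such a power law if you tracked it. While $|F_k(z)|\ge1/2$ one has $1-|F_{k+1}(z)|\ge\frac{3}{2+\lambda}(1-|F_k(z)|)$, so at most $1+\frac{3}{1-\lambda}\log\frac{1}{1-|z|}$ steps are spent there. Since $\log\frac{2+\lambda}{1+2\lambda}\le\frac{1-\lambda}{2}$, this yields $|F_n(z)|\le C\rho^n(1-|z|)^{-p}$ with $\rho=\frac{1+2\lambda}{2+\lambda}$ and absolute $C,p$.

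With that, your Fourier route can be repaired by doing the Cauchy estimate for the $k$-th coefficient on $|z|=1-1/k$, which loses only a factor $k^p$. A $C^1$ bound on $\phi$ then no longer controls the sum, so it is simplest to use trigonometric majorants and minorants of degree $N=\rho^{-n/(p+1)}$ (an Erd\H{o}s--Tur\'an type inequality).

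The qualitative $o(1)$ statement can be obtained on a fixed disc along your lines, provided you use trigonometric-polynomial approximants; for a merely $C^1$ function, $\sum|\hat\phi(k)|r_0^{-|k|}$ may diverge. The resulting $o(1)$ is then uniform in $A$ as well.
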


Pommerenke used the exponential mixing estimate \eqref{eq:expmixing} to obtain a positive result on the entropy of the system $(f_n)$ on $\partial \D$. More widely, Theorem~\ref{thm:Pom3} has had significant applications to the boundary dynamics of inner functions; see \cite{Fernandez, BEFRS2, BEFRS3, Bennett26}. For example, in \cite{BEFRS3} we used \eqref{eq:expmixing} to prove a sharp shrinking target result for forward compositions of centred inner functions, with an application to boundary dynamics of wandering domains of transcendental entire functions.

Centred inner functions are measure-preserving on $\partial \D$, that is, $|f^{-1}(E)|=|E|$ for all such functions $f$ and measurable sets~$E$ in $\partial \D$, and the forward compositions of centred inner functions in Theorem~\ref{thm:Pom3} form a non-autonomous dynamical system. The mixing results in Theorem~\ref{thm:Pom3} measure the extent to which preimages of sets under sequences of centred inner functions, or under forward compositions of such functions, are uniformly distributed on $\partial \D$; see \cite{FerreiraNic} for another approach to mixing for forward compositions of inner functions, related to an extension of the concept of ergodicity for such systems.

In this paper, we generalise Theorem~\ref{thm:Pom3} by relaxing the uniform bound on $|f'_n(0)|$, $n=1,2,\ldots,$ and we use this generalisation to obtain new criteria for the entropy of such non-autonomous dynamical systems to be positive.
\begin{thmA}\label{lem:3}
Let $f_n$, $n\in \N$, be centred inner functions such that $|f_n'(0)|\le \lambda_n$, where $0<a\le \lambda_n <1$ and $\lambda_n \cdots \lambda_1 \to 0$ as $n \to \infty$. Then $F_n= f_n \circ \cdots \circ f_1$ satisfies
\begin{equation}\label{eq:newmixing}
\left|\frac{|A \cap F_n^{-1}(E)|}{|E|} - \frac{|A|}{2\pi}\right| \le K(\lambda_n \cdots \lambda_1)^{\delta},
\end{equation}
for all arcs $A\subset \partial\D$ and measurable sets $E$ in $\partial \D$ with $|E|>0$, where $\delta=(6(1/a+1))^{-1}$ and $K=K(a)$ is a positive constant.
\end{thmA}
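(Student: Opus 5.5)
Following Pommerenke's approach, I would reduce mixing to a statement about harmonic measure, and control that by bounding $|F_n(z)|$ for $z\in\D$. Fix a measurable $E\subset\partial\D$ with $|E|>0$ and let $u_E$ be the harmonic extension of $\chi_E$. Since $F_n$ is inner, $u_E\circ F_n$ is the harmonic extension of $\chi_{F_n^{-1}(E)}$. Testing against an arc $A$, $|A\cap F_n^{-1}(E)|$ is the integral of the boundary values of $u_E\circ F_n$ over $A$. Approximate $\chi_A$ from above and below by Poisson integrals, that is, by evaluating at points $z=re^{i\theta}$ with $1-r$ small. The error then splits into two parts. The first is a geometric term of order $(1-r)$ up to logarithms, coming from smoothing $\chi_A$. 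The second is a term measuring how far $u_E(F_n(z))$ is from $|E|/2\pi$. For $|w|\le\rho$, Harnack gives
\[
\left|u_E(w)-\frac{|E|}{2\pi}\right|\le \frac{2\rho}{1-\rho}\,\frac{|E|}{2\pi}.
\]
So the key quantity is $\sup_{|z|\le r}|F_n(z)|$. Choosing $1-r$ as a suitable power of $\Lambda_n:=\lambda_n\cdots\lambda_1$ should give \eqref{eq:newmixing}, with $\delta$ coming out of this balancing.

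The central step, and the one I expect to be the main obstacle, is a precise orbit bound: for $|z|\le r$,
\[
|F_n(z)| \le C(a)\,\frac{\Lambda_n^{\gamma}}{(1-r)^{\beta}}
\]
with explicit exponents $\gamma,\beta$ depending on $a$. I would attack it with the Schwarz-type bound for a centred self-map $f$ with $|f'(0)|\le\lambda$, namely
\[
|f(w)|\le |w|\,\frac{|w|+\lambda}{1+\lambda|w|}.
\]
This is sharp and follows by applying Schwarz--Pick to $f(w)/w$. For $|w|$ small, the factor is roughly $\lambda+|w|$. The difficulty is that while $|F_k(z)|$ is close to $1$ the contraction is weak, and the factors $\lambda_k$ may approach $1$, so no uniform rate is available. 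I would split the orbit into two phases. In the first phase, $|F_k(z)|$ drops from near $1$ down to a fixed threshold such as $1/2$. Passing to the hyperbolic distance $d_k$ from $0$, each step reduces $d_k$ by an amount comparable to $1-\lambda_k$ in hyperbolic terms, up to a bounded factor; this is where the lower bound $\lambda_n\ge a$ enters, through $1/a$. The initial distance is about $\tfrac12\log\frac{1}{1-r}$. In the second phase, each step multiplies $|F_k|$ by at most $\lambda_k+|F_{k-1}|$. Once $|F_k|$ is small this is essentially $\lambda_k(1+|F_{k-1}|/a)$, and the product of the correction factors stays bounded because $|F_k|$ itself decays geometrically along a subsequence.

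To relate the two phases I would use $1-\lambda_k\ge c(a)\log(1/\lambda_k)$, which follows from $\lambda_k\ge a$. This lets the total contraction $\sum(1-\lambda_k)$ be compared with $\log(1/\Lambda_n)$. The outcome is a bound of the form $|F_n(z)|\le C\Lambda_n^{\gamma}(1-r)^{-\beta}$. Choosing $1-r=\Lambda_n^{\gamma/(1+\beta)}$ then makes both error terms of order $\Lambda_n^{\delta}$. I expect the constants to produce $\delta=(6(1/a+1))^{-1}$, the factor $1/a+1$ arising from the first-phase comparison. Finally, when $\Lambda_n$ is not small (at most finitely many $n$ matter, but to be uniform), the estimate is trivial by adjusting $K$, since the left side of \eqref{eq:newmixing} is always at most $1+|A|/2\pi\le 2$.
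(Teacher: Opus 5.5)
Your reduction (equality in L\"owner's lemma, a one-sided Poisson sandwich of $\chi_A$ at radius $r$, Harnack at $F_n(z)$) matches the paper's. So does your overall plan for the orbit bound. Your two phases are the paper's proof of Theorem~B in reverse order: Lemma~\ref{lem:weakineq} supplies the product estimate on $|z|\le b$, and the backward iteration over $b<|z|<1$ plays the role of your hyperbolic-distance count.

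Your first phase is sound. The identity $\frac{1+\psi_k(s)}{1-\psi_k(s)}=\bigl(1-\frac{2\mu_k s}{(1+s)^2}\bigr)\frac{1+s}{1-s}$ shows that each step with $|F_{k-1}(z)|\ge\frac12$ lowers the hyperbolic distance by at least $\frac29\mu_k$. Hence $\Lambda_{k_1}\ge a\bigl(\frac{1-r}{2}\bigr)^{9/(4a)}$ at the exit time $k_1$.

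\textbf{The gap is in the second phase.} Bounding $\frac{|w|+\lambda_k}{1+\lambda_k|w|}$ by $\lambda_k+|w|\le\lambda_k(1+|w|/a)$ discards the factor $\mu_k=1-\lambda_k$. Then $\prod_k(1+|F_{k-1}(z)|/a)$ is bounded only if $\sum_k|F_k(z)|<\infty$. This fails in exactly the non-uniform regime the theorem is about. For $f_k=\psi_k$ with $\mu_k=1/(k+1)$ (so $a=\frac12$) one has $\Lambda_k=1/(k+1)$ and $|F_k(x)|\ge\Lambda_k x$. Your product therefore grows like a positive power of $n$, and the method yields only $|F_n|\lesssim\Lambda_n n^{c}$, not $C\Lambda_n$. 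Geometric decay of $|F_k|$ along a subsequence does not rescue this. A block on which $\sum\mu_j\approx1$ can contain arbitrarily many indices, each contributing a factor $1+|F_{k-1}|/a$.

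\textbf{The fix is to keep the $\mu_k$.} Use
\[
\frac{|w|+\lambda}{1+\lambda|w|}\le\lambda\Bigl(1+\frac{1-\lambda^2}{\lambda}|w|\Bigr)\le\lambda\Bigl(1+\frac{2\mu}{a}|w|\Bigr),
\]
so the correction becomes $\exp\bigl(\frac2a\sum_{k>k_1}\mu_k|F_{k-1}(z)|\bigr)$. Since $\psi_k(s)\le s\bigl(1-\frac12(1-s)\mu_k\bigr)$, you get $|F_{k-1}(z)|\le\frac12\exp\bigl(-\frac14\sum_{k_1<j<k}\mu_j\bigr)$. Lemma~\ref{lem:exp} then gives $\sum_{k>k_1}\mu_k|F_{k-1}(z)|\le\frac12(1+4e^{1/4})$. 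This is precisely the mechanism of Lemma~\ref{lem:weakineq}, and with it your argument yields $|F_n(z)|\le C(a)\Lambda_n(1-|z|)^{-9/(4a)}$.

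Two smaller cautions.
\begin{itemize}
\item The smoothing step needs a one-sided pointwise bound, uniform in the unknown location of $F_n^{-1}(E)$; the $L^1$ error of $P_r*\chi_A$ is not the relevant quantity. The simple margin sandwich gives error $\eps+C(1-r)/\eps$, i.e.\ order $(1-r)^{1/2}$, not $(1-r)$, unless you build a more careful majorant.
\item The stated $\delta$ is not automatic. With $\beta=9/(4a)$ and the paper's cruder $(1-r)^{1/3}$ estimate, you fall short of $(6(1/a+1))^{-1}$ for small $a$. The $(1-r)^{1/2}$ estimate gives the exponent $(2\beta+1)^{-1}=2a/(9+2a)$, which exceeds the stated $\delta$ for every $a\in(0,1)$.
\end{itemize}
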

If we impose the constraint that $|f_n'(0)|\le \lambda$ where $1/2 \le \lambda<1$, as in Theorem~\ref{thm:Pom3}, then the upper bound in \eqref{eq:newmixing} is of the form $K d^n$, where $d=\lambda^{1/18}$ and $K$ is absolute; this constant~$d$ is somewhat less than the constant~$c$ in Theorem~\ref{thm:Pom3}.

In order to prove Theorem~\ref{thm:Pom3}, which concerns the pre-image behaviour of $(F_n)$ on $\partial\D$, Pommerenke had the insight to use the forward behaviour of $(F_n)$ within $\D$. He obtained the following estimate \cite[Lemma 2]{pommerenke-ergodic} for the rate of convergence to~0 of forward compositions of inner functions that fix~0 and have a uniformly bounded derivative at~0, which is of interest in its own right; see, for example, the application in \cite{Nicolau-Donaire}.
\begin{thm}\label{lem:Pom3}
Let $f_n$ be centred inner functions with $|f_n'(0)| \leq \lambda$ for all $n \in \N$, where $1/2\le\lambda <1$. Then $F_n = f_n \circ \cdots \circ f_1$ satisfies
\[
|F_n(z)|\le \frac{\lambda^n}{(1-|z|)^{13}},\quad\text{for } z\in \D.
\]
\end{thm}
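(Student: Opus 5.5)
The plan is to reduce everything to a one-dimensional recursion via a sharpened Schwarz lemma, and then to estimate that recursion in two regimes. For a centred inner function $f$ with $|f'(0)|\le\lambda$, the function $g(z)=f(z)/z$ is analytic in $\D$ with $|g|\le 1$ and $|g(0)|\le\lambda$. By the Schwarz--Pick lemma applied to $g$,
\[
|f(z)|\le |z|\,\frac{|z|+\lambda}{1+\lambda|z|}=:\phi(|z|).
\]
Since $\phi$ is increasing on $[0,1)$, the radii $r_0=|z|$ and $r_{k+1}=\phi(r_k)$ satisfy $|F_n(z)|\le r_n$ by induction. It therefore suffices to prove $r_n\le \lambda^n(1-r_0)^{-13}$ for every $r_0\in[0,1)$ and every $\lambda\in[1/2,1)$.

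The basic identity is
\[
\frac{\phi(r)}{\lambda r}=1+\frac{r(1-\lambda^2)}{\lambda(1+\lambda r)}\le 1+4(1-\lambda)r ,
\]
using $\lambda\ge 1/2$. This gives
\[
r_n\le \lambda^n r_0\prod_{k=0}^{n-1}\bigl(1+4(1-\lambda)r_k\bigr)\le \lambda^n r_0\exp\Bigl(4(1-\lambda)\sum_{k\ge0} r_k\Bigr).
\]
So the whole task is to show
\[
\sum_{k\ge 0} r_k\le \frac{C}{1-\lambda}\,\log\frac{1}{1-r_0}+\frac{C'r_0}{1-\lambda},
\]
with constants such that $4C\le 13$ or so. Absorbing the term $r_0e^{4C'r_0}$ then uses $(1-r_0)^{-1}\ge e^{r_0}$.

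To bound $\sum r_k$ I will use the contraction
\[
1-\frac{\phi(r)}{r}=\frac{(1-\lambda)(1-r)}{1+\lambda r}\ge \frac{(1-\lambda)(1-r)}{2}.
\]
\emph{Phase two} consists of the indices with $r_k\le 1/2$. There $r_{k+1}\le \rho r_k$ with $1-\rho\ge (1-\lambda)/3$, so the tail sum is at most $3r_{k_0}/(1-\lambda)$. When $r_0\le 1/2$ this alone gives $r_n\le \lambda^n r_0e^{12r_0}\le\lambda^n(1-r_0)^{-13}$.

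\emph{Phase one} consists of the indices with $r_k>1/2$. There
\[
1-r_{k+1}\ge (1-r_k)\bigl(1+\tfrac{(1-\lambda)}{4}\bigr)
\]
up to adjusting constants. This follows from $r_{k+1}\le r_k-\tfrac{(1-\lambda)}{2}r_k(1-r_k)$ and $r_k\ge1/2$. Hence the number $N$ of phase-one steps satisfies $N\le \log\frac{1}{1-r_0}\big/\log\bigl(1+\tfrac{1-\lambda}{4}\bigr)\lesssim \frac{5}{1-\lambda}\log\frac1{1-r_0}$, and the phase-one contribution to $\sum r_k$ is at most $N$.

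It may be cleaner to handle phase one directly rather than through the product. For $k\le N$ use the trivial bound $r_k\le 1$ and check $\lambda^{-N}\le (1-r_0)^{-C}$. This follows from $\log(1/\lambda)\le 2(1-\lambda)$ together with $\log(1+x)\ge 0.8x$ for $x\le 1/8$, giving $C\approx 10$. Then restart the phase-two estimate from $r_N\le1/2$, which contributes only the absolute factor $e^{6}$. That factor is absorbed because $(1-r_0)^{-3}\ge 8$ when $r_0>1/2$.

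The main obstacle is the bookkeeping of constants, so that the final exponent comes out as $13$, or at most a number $\le13$, uniformly in $\lambda\in[1/2,1)$. In particular, the factor $1/(1-\lambda)$ from the number of phase-one steps must cancel exactly against $\log(1/\lambda)\asymp 1-\lambda$, and the phase-two constant must be absorbed. I would first try the split at $r=1/2$ as above, and move the threshold if the constants do not fit.
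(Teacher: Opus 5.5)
Your reduction to the radial recursion $r_{k+1}=\phi(r_k)$, the product bound $r_n\le\lambda^n r_0\prod_{k<n}\bigl(1+4(1-\lambda)r_k\bigr)$, the phase-two estimate and the phase-one expansion $1-r_{k+1}\ge\bigl(1+\tfrac{1-\lambda}{4}\bigr)(1-r_k)$ are all correct, and the strategy does prove the statement.

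However, the final absorption step fails as written. You have $\lambda^{-N}\le(1-r_0)^{-10}$ and the phase-two factor $r_Ne^{12r_N}\le\tfrac12e^{6}\approx 200$. The remark that $(1-r_0)^{-3}\ge 8$ absorbs nothing of that size. Knowing only $(1-r_0)^{-1}>2$, you would need $(1-r_0)^{-8}$, so your bookkeeping gives exponent $18$, not $13$.

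The fix is to count the phase-one steps more carefully. The expansion is needed only for $k\le N-2$, and $1-r_{N-1}<\tfrac12$. Hence $\bigl(1+\tfrac{1-\lambda}{4}\bigr)^{N-1}<1/(2(1-r_0))$, so $\lambda^{-(N-1)}<(2(1-r_0))^{-10}$ and $\lambda^{-N}<2^{-9}(1-r_0)^{-10}$. For $n\ge N$ this gives $r_n\le\lambda^n\,2^{-10}e^{6}(1-r_0)^{-10}<\lambda^n(1-r_0)^{-10}$. For $n<N$ you have $\lambda^n(1-r_0)^{-10}\ge\lambda^{N-1}(1-r_0)^{-10}>2^{10}>r_n$. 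So even your crude constants give exponent $10$. Using $(1+\lambda)/\lambda\le 3$ and the expansion factor $1+\tfrac{1-\lambda}{3}$ (valid since $r/(1+\lambda r)\ge 1/3$ for $r\ge 1/2$) lowers it to about $6.5$.

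The paper does not reprove this statement; it is quoted from Pommerenke. The natural comparison is with its proof of Theorem~B (Lemma~\ref{lem:weakineq} and the proof of Theorem~\ref{thm:B2}). That proof uses the same comparison maps and the same product inequality. But it bounds the iterates inside the product by the uniform contraction $|z|\exp\bigl(-\tfrac12(1-|z|)(\mu_1+\cdots+\mu_k)\bigr)$ and sums with Lemma~\ref{lem:exp}, which gives only $\exp\bigl(C/(a(1-|z|))\bigr)$. That is a power bound only on a disc $|z|\le b$ with $b$ close to $1$. It is then pushed out to all of $\D$ by backward iteration: a Taylor expansion of $\psi_m$ at $1$ shows $r_{m+1}/(\lambda_m r_m)\le\bigl((1-r_{m+1})/(1-r_m)\bigr)^p$ whenever $r_m>b$.

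Your count of the steps with $r_k>1/2$ is essentially the multiplied-out form of that per-step inequality. By using the product only where $r_k\le 1/2$, where it contributes an absolute constant, you avoid both the exponential intermediate bound and Lemma~\ref{lem:exp}. In the uniform case your route is shorter and keeps the leading constant equal to $1$, which Theorem~B, with its large constant $K(a)$, does not recover. The paper's per-step formulation is designed for variable $\lambda_n$ and, with $b$ close to $1$, gives exponents close to $2/a$. Your counting argument could also handle variable $\lambda_n$, by comparing $\log(1/\lambda_k)$ with $\log\bigl(1+\tfrac{1-\lambda_k}{4}\bigr)$ step by step.
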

The key feature of this estimate that is needed to prove Theorem~\ref{thm:Pom3} is that the growth as $|z|\to 1$ is a power of $1/(1-|z|)$. Pommerenke commented that the bound in this estimate is `surely not sharp', a statement that also applies to our results in this paper.

To prove Theorem~A we generalise Theorem~\ref{lem:Pom3} to a non-uniform setting, as follows.

\begin{thmB}\label{thm:B1}
For $n\ge 1$, let $f_n$ be centred inner functions with $|f_n'(0)|\le \lambda_n$, where $0<a\le \lambda_n<1$ and $\lambda_n \cdots \lambda_1 \to 0$ as $n \to \infty$. Then $F_n= f_n \circ \cdots \circ f_1$ satisfies
\begin{equation}\label{eq:est2}
|F_n(z)| \leq K \lambda_n \cdots \lambda_1 \frac{|z|}{(1-|z|)^{p}} ,\quad \text{for} \; z \in \mathbb{D},
\end{equation}
where $p=2/a+ 1$ and $K=K(a)$ is a positive constant.

\end{thmB}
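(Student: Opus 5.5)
We argue by pulling points towards~0 through a controlled number of initial steps, after which a uniform linear contraction estimate can be iterated. The basic tool is the Schwarz-type bound for a centred holomorphic self-map $f$ of $\D$ with $|f'(0)|\le\lambda$:
\begin{equation*}
|f(z)|\le |z|\,\frac{|z|+\lambda}{1+\lambda|z|},\qquad z\in\D,
\end{equation*}
which comes from applying Schwarz--Pick to $g(z)=f(z)/z$, whose modulus at~0 is at most $\lambda$. Two consequences follow. For small $|z|$ we get $|f(z)|\le\lambda|z|(1+O(|z|))$, and for $|z|$ close to~1 we get $1-|f(z)|\ge c(a)(1-|z|)$. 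The second holds because $(1-|z|)(1-\lambda)/(1+\lambda|z|)$ is the gain, and for points near the boundary the relevant comparison is $1-|f(z)|\gtrsim \frac{1-\lambda}{1+\lambda}(1-|z|)$.

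That last estimate depends on $1-\lambda_n$, which is not bounded below, so the first step must be done more carefully. Write $r=|z|$ and $\rho_k=|F_k(z)|$. Iterating the bound gives
\[
\rho_{k}\le \rho_{k-1}\frac{\rho_{k-1}+\lambda_k}{1+\lambda_k\rho_{k-1}}.
\]
In terms of $s=1-\rho$, this is roughly $s_k\ge s_{k-1}\bigl(1+\tfrac{1-\lambda_k}{1+\lambda_k}\bigr)$ up to second-order corrections. Since $\lambda_k\ge a$, we can write
\[
\frac{1+\lambda_k}{2}\ \text{ or }\ \frac{2\lambda_k}{1+\lambda_k}\ \le\ \lambda_k^{1/(1+a)\cdot\text{const}},
\]
and hence each factor $(1-\rho_k)^{-1}$ decreases multiplicatively by at least $\lambda_k^{-\theta}$ with $\theta\approx a/2$. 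After $m$ steps this gives $1-\rho_m\gtrsim (1-r)(\lambda_m\cdots\lambda_1)^{-\theta}$, as long as $\rho_m$ stays above a fixed threshold, say $1/2$. So let $m$ be the first index with $\rho_m\le 1/2$. Then $\lambda_m\cdots\lambda_1\gtrsim (1-r)^{1/\theta}$, which uses $\lambda_k\ge a$ to control the overshoot at the last step. This is where the exponent $p=2/a+1$ should come from, with $1/\theta\approx 2/a$.

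For the second phase, once $\rho_m\le1/2$, we apply the small-$|z|$ bound along the rest of the orbit:
\[
\rho_n\le \rho_m\prod_{k>m}\lambda_k\frac{1+\rho_{k-1}/\lambda_k}{1+\lambda_k\rho_{k-1}}\le \rho_m\prod_{k>m}\lambda_k\,(1+C\rho_{k-1}),
\]
with $C=1/a$. The product $\prod(1+C\rho_{k-1})$ is bounded by a constant $K(a)$ provided $\sum_k\rho_k$ converges uniformly. We obtain this summability by a further splitting. The hypothesis $\lambda_n\cdots\lambda_1\to0$ alone does not force geometric decay, so instead we use the self-improving inequality $\rho_k\le \rho_{k-1}(\rho_{k-1}+\lambda_k)/(1+\lambda_k\rho_{k-1})$. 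While $\rho\le1/2$, this gives $\rho_k\le\rho_{k-1}\cdot\frac{1/2+\lambda_k}{1+\lambda_k/2}$, which is at most $\rho_{k-1}$ but may be arbitrarily close to it when $\lambda_k\to1$. We handle this case by grouping the $k$'s into blocks on which $\prod\lambda_k\le1/2$, so that on each block $\rho$ drops by a factor depending only on $a$. The sum $\sum\rho_k\,(1-\lambda_k)$-type terms is what actually appears after a sharper expansion, namely $\log\frac{1+\rho/\lambda}{1+\lambda\rho}\le \rho(1/\lambda-\lambda)\le C(1-\lambda)\rho$, and this sum is controlled by $\sum(1-\lambda_k)\rho_{k-1}\lesssim\sum_j 2^{-j}\cdot O(1)$.

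Finally, combining the two phases gives
\[
\rho_n\le K\,\lambda_n\cdots\lambda_{m+1}\le K\frac{\lambda_n\cdots\lambda_1}{\lambda_m\cdots\lambda_1}\le K\lambda_n\cdots\lambda_1(1-r)^{-p}.
\]
The factor $|z|$ is obtained separately for small $|z|$: there phase one is empty, and we carry $\rho_0=r$ through phase two. The main obstacle is phase one, where we need to turn the contraction in $1-\rho$ into a power of $\prod\lambda_k$ uniformly in the $\lambda_k\in[a,1)$ and get the exponent $2/a+1$. The first thing to try is the elementary inequality $\frac{2\lambda}{1+\lambda}\ge\lambda^{1/(1+a)}$-style comparison on $[a,1)$ applied to the hyperbolic-distance form of Schwarz--Pick for $g=f/z$.
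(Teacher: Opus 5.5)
Your two-regime architecture is sound and is essentially the paper's. There is a near-boundary phase in which $1-\rho_k$ grows by a factor compared with a power of $\lambda_k$. This is followed by the linearization $\rho_k\le\lambda_k\rho_{k-1}\bigl(1+\frac{1-\lambda_k^2}{\lambda_k}\rho_{k-1}\bigr)$ with $\sum_k\mu_k\rho_{k-1}$ controlled.

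\textbf{The gap.} The phase-one estimate is missing: you leave it open as ``the main obstacle'', and the per-step form you state is false at the threshold you chose. Since $\rho_k\le\psi_{\lambda_k}(\rho_{k-1})$ and $1-\psi_\lambda(\rho)=(1-\rho)\frac{1+\rho}{1+\lambda\rho}$, the gain is
$\frac{1-\rho_k}{1-\rho_{k-1}}\ge\frac{1+\rho_{k-1}}{1+\lambda_k\rho_{k-1}}$.
The value $2/(1+\lambda)$ behind your $\theta\approx a/2$ is only the limit of this as $\rho\to1$. For $\rho_{k-1}$ just above $\frac12$ the gain is about $3/(2+\lambda_k)=1+\frac{1-\lambda_k}{2+\lambda_k}$. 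Comparing with $\lambda_k^{-\theta}\approx1+\theta(1-\lambda_k)$ as $\lambda_k\to1$ forces $\theta\le\frac13$. So a per-step gain $\lambda_k^{-\theta}$ with $\theta\approx a/2$ fails as soon as $a>2/3$, and the comparison $\frac{2\lambda}{1+\lambda}\ge\lambda^{1/(1+a)}$ you suggest does not address this.

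\textbf{The fix.} For $\rho\ge\frac12$ and $a\le\lambda<1$,
\[
\frac{1+\rho}{1+\lambda\rho}\ \ge\ \frac{3}{2+\lambda}\ =\ 1+\frac{1-\lambda}{2+\lambda}\ \ge\ 1+\frac{1-\lambda}{p\lambda}\ \ge\ \Bigl(1+\frac{1-\lambda}{\lambda}\Bigr)^{1/p}=\lambda^{-1/p}.
\]
The middle step is $p\lambda\ge2+\lambda$, i.e.\ $p\ge1+2/\lambda$, which holds for $p=2/a+1$. The last step is Bernoulli's inequality. So the `$+1$' in $p$ is exactly the price of stopping at $\frac12$, not a by-product of $1/\theta\approx2/a$.

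With this, $1\ge1-\rho_m\ge(1-r)(\lambda_m\cdots\lambda_1)^{-1/p}$ gives $\lambda_m\cdots\lambda_1\ge(1-r)^p$ with no overshoot argument. The same chain disposes of the case $n<m$, which you did not treat: there $\rho_n\le1\le\lambda_n\cdots\lambda_1(1-r)^{-p}$ and $1<2r$.

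In phase two, $\sum_k\rho_k$ need not converge, as you noticed. The bound $\sum_{k>m}\mu_k\rho_{k-1}\le C\rho_m$ follows from $\rho_k\le\rho_{k-1}(1-\mu_k/3)$ for $\rho_{k-1}\le\frac12$, together with Lemma~\ref{lem:exp}. Your blocks also work, but the guaranteed decay per block is $2^{-a/3}$, not $\frac12$.

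\textbf{Comparison with the paper.} Once repaired, your route differs from the paper's in where the two regimes meet. The paper runs the same comparison backwards along $r_{k+1}=\psi_k(r_k)$, in the form $\frac{r_{k+1}}{\lambda_kr_k}\le\bigl(\frac{1-r_{k+1}}{1-r_k}\bigr)^p$, which carries the factor $|z|$ along. It does so only for $r_k>b=1-a\eps/12$. That close to $\partial\D$, the cruder bounds $1/\lambda\le1+(1-\lambda)/a$ and $\frac{1-r_{k+1}}{1-r_k}>1+\bigl(\frac12-2(1-b)\bigr)(1-\lambda)$ suffice for every $p>2/a$ (Theorem~\ref{thm:B2}).

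The price is that the inner region $|z|\le b$ is large. So the paper first needs the global bound $\Psi_n(|z|)\le\lambda_n\cdots\lambda_1|z|\exp\bigl(\frac{4e^{1/2}/a}{1-|z|}\bigr)$ of Lemma~\ref{lem:weakineq}, which is where $K=\exp(48e^{1/2}/(a^2\eps))$ comes from.

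Keeping the exact gain factor lets you stop at $\frac12$, where the linearization alone gives $K$ of order $\exp(C/a)$. By the same chain, a threshold $t\in(0,1)$ gives $p=1+1/(at)$, which is below the paper's $2/a$ when $t$ is near $1$. The interior constant is then of order $\exp(C/(a(1-t)))$.
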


The proof of Theorem~B is considerably more complicated than that of Theorem~\ref{lem:Pom3} because of the lack of a uniform bound less than~1 on the $\lambda_n$. Note that the constants $K=K(a)$ in \eqref{eq:est2} and \eqref{eq:newmixing} are exponentially large for values of~$a$ close to~0. These constants cannot be taken to be independent of~$a$ because the contracting effect of an inner function~$f$ near the origin, when $|f'(0)|$ is small, is much greater than at points close to $\partial\D$. However, the constants $K(a)$ and $\delta(a)$ can be taken to be absolute if we allow a product of larger $\lambda_n$ terms on the right; see Remark~\ref{bigger-lambda}.

In later work, Pommerenke studied such forward compositions of centred inner functions of the non-uniform type \cite{PommComps}. Among the results he proved was an upper bound for $|F_n(z)|$ of the form $M\lambda_n \cdots \lambda_1$, where~$M$ was an unspecified positive quantity, stated to depend only on~$a$ and~$|z|$. Inspection of the argument in \cite[Section~2]{PommComps} shows that it gives
\[
|F_n(z)|\le \lambda_n \cdots \lambda_1|z|\exp\left(\frac{4\log(1/a)}{a(1-|z|)}\right),\quad\text{for }z\in\D.
\]
Our proof of Theorem~B begins by finding a (slightly smaller) bound of this type, and it then leverages the exponential bound to obtain \eqref{eq:est2}, which allows us to deduce Theorem~A.

Pommerenke also used Theorem~\ref{thm:Pom3} in \cite{pommerenke-ergodic} to study the entropy of the boundary values of forward compositions of inner functions and showed that this quantity is positive. Theorem~A allows us to generalise his result somewhat.

First, we introduce the notion of entropy defined in \cite{pommerenke-ergodic} for forward compositions $F_n=f_n\circ\cdots\circ f_1$ of centred inner functions $(f_n)$.   Let~$\AA$ denote the set of finite partitions of the unit circle. For such a partition $\{A_1,\cdots,A_N\}$ and $k_0, k_1, \ldots, k_n \in \{1,2, \ldots, N\}$, let
\begin{equation}\label{eq:Ak0kn}
A_{k_0\ldots k_n}=A_{k_0}\cap F_1^{-1}(A_{k_1})\cap \cdots\cap F_n^{-1}(A_{k_n}),
\end{equation}
which is the set of points in $\partial\D$ with initial itinerary $(k_0\ldots k_n)$. Pommerenke defined the {\em lower entropy} of the non-autonomous dynamical system $(f_n)$ to be
\begin{equation}\label{eq:lower-ent}
\underline{h}((f_n)):=
\sup_\AA \liminf_{n\to\infty}\frac{1}{n+1} \sum_{k_0=1}^N \ldots \sum_{k_n=1}^N
\vert A_{k_0\ldots k_n}\vert \log \frac{2\pi}{\vert A_{k_0\ldots k_n}\vert}.
\end{equation}
For an autonomous system, where $f_n=f$ for all~$n$, the lower entropy coincides with the measure theoretic entropy $h(f)$, also called the metric entropy; see \cite[Chapter~4]{Walters}.

Pommerenke gave the following lower bound for the lower entropy of the non-autonomous system $(f_n)$ under the condition of uniform contraction \cite[Theorem~2 and Corollary]{pommerenke-ergodic}.
\begin{thm}\label{Pomm-ent}
Let $f_n$ be centred inner functions with $\vert f_n'(0)\vert \leq \lambda <1$, for $n=1,2,\ldots$. Then, the lower entropy $\underline{h}$ of $(f_n)$ satisfies
\[
\underline{h}((f_n)) \geq c(1-\lambda),
\]
where $c>0$ is an absolute constant.

In particular, if~$f$ is a centred inner function, then $h(f)\geq c(1-|f'(0)|)$.
\end{thm}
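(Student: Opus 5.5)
The plan is to derive the entropy bound from the exponential mixing estimate \eqref{eq:expmixing} of Theorem~\ref{thm:Pom3}, using a partition of $\partial\D$ into $N$ equal arcs $A_1,\dots,A_N$. For $\lambda\ge 1/2$ the theorem applies directly. For $\lambda<1/2$ we use the same bound with $\lambda$ replaced by $1/2$, which only affects the absolute constant, since $1-\lambda\le 1$ in any case. The idea is that the mixing estimate makes the itinerary cylinders $A_{k_0\ldots k_n}$ behave almost like independent events once the gap between the times is large. This gives a definite amount of information gain per block of times.

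\textbf{Step 1: blocking.} Fix $N=2$, two half-circles, and choose a gap $m$ with $Kc^m\le 1/8$. Since $c=\exp(-(1-\lambda)/84)$, this means $m\asymp 1/(1-\lambda)$. We then consider only the times $0,m,2m,\dots,jm$ with $jm\le n$. The entropy sum in \eqref{eq:lower-ent} is the entropy $H(\mathcal P_n)$ of the refined partition $\mathcal P_n=\bigvee_{i\le n}F_i^{-1}\mathcal A$, where $F_0$ is the identity. By monotonicity of entropy under refinement,
\[
H(\mathcal P_n)\ge H\Bigl(\bigvee_{i\le j}F_{im}^{-1}\mathcal A\Bigr).
\]

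\textbf{Step 2: conditional entropy chain rule.} Write the right-hand side as a sum of conditional entropies $H(F_{im}^{-1}\mathcal A\mid \mathcal Q_{i-1})$, where $\mathcal Q_{i-1}=\bigvee_{l<i}F_{lm}^{-1}\mathcal A$. Each atom $Q$ of $\mathcal Q_{i-1}$ has the form $Q=B\cap F_{m}^{-1}(\cdots)$ pulled back by $F_{(i-1)m}$.

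The key claim is that for each atom $Q$ and each arc $A_k$,
\[
\frac{|Q\cap F_{im}^{-1}(A_k)|}{|Q|}\in\Bigl[\tfrac14,\tfrac34\Bigr].
\]
This would give conditional entropy at least $\log(4/3)\cdot\frac14$-type constants, and hence $H\ge c_0 j\approx c_0 n/m\asymp c_0(1-\lambda)n$.

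To prove the claim, write $F_{im}=G\circ F_{(i-1)m}$ with $G=f_{im}\circ\cdots\circ f_{(i-1)m+1}$. Since $F_{(i-1)m}$ is measure preserving, this reduces to mixing for $G$ relative to sets $Q'$ that are intersections of earlier pulled-back arcs.

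\textbf{Main obstacle.} Theorem~\ref{thm:Pom3} is stated for arcs $A$, whereas the atoms $Q$ are general measurable sets. To handle this, I would exploit the symmetric form: \eqref{eq:expmixing} with $A$ an arc and $E$ arbitrary. I would use the invariance $|X\cap F^{-1}Y|$ and reverse the roles, taking the \emph{latest} arc as $A$ at the front of the composition. Concretely, I would condition backwards: the first time-$0$ arc $A_{k_0}$ is an arc, and $E=$ (future itinerary set, pulled back to time $m$) is arbitrary measurable. This gives $|A_{k_0}\cap F_m^{-1}E|\approx \frac12|E|$. So it is more natural to build the chain rule from the past end, computing $H(\mathcal A\mid \text{future})$ and iterating with the shifted sequence $(f_{lm+1},f_{lm+2},\dots)$, which satisfies the same hypotheses.

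Iterating this, each of the $j$ blocks contributes at least an absolute constant, since conditional probabilities lie in $[1/2-1/8,\,1/2+1/8]$. Hence
\[
\liminf_{n\to\infty}\frac{H(\mathcal P_n)}{n+1}\ge \frac{c_0}{m}\ge c(1-\lambda).
\]
The autonomous statement follows by taking $f_n=f$, $\lambda=|f'(0)|$, since the lower entropy then equals $h(f)$.
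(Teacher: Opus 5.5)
Your argument is correct once you commit to the backward conditioning you reach under ``Main obstacle''. The forward version of Step~2 should be deleted. An atom $Q$ of $\bigvee_{l<i}F_{lm}^{-1}\mathcal A$ carries the time-$0$ constraint $A_{k_0}$, so it is not of the form $F_{(i-1)m}^{-1}(Q')$, and measure preservation does not reduce your ``key claim'' to a statement about $G$. Even if it did, \eqref{eq:expmixing} needs an arc, not $Q'$, in the role of $A$.

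The valid version runs as follows. Put $G_1=f_m\circ\cdots\circ f_1$, and let $\mathcal R$ be the partition generated by the later block times for the shifted sequence $(f_{m+1},f_{m+2},\ldots)$. Then
\[
H\bigl(\mathcal A\vee G_1^{-1}\mathcal R\bigr)=H(\mathcal R)+H\bigl(\mathcal A\mid G_1^{-1}\mathcal R\bigr).
\]
The first term uses $|G_1^{-1}(R)|=|R|$. In the second term, the conditional probabilities $|A_k\cap G_1^{-1}(R)|/|R|$ lie in $[3/8,5/8]$ by \eqref{eq:expmixing} with $E=R$. The shifted sequences satisfy the same uniform hypothesis, so induction on the number of blocks gives $H(\mathcal P_n)\ge c_0(\lfloor n/m\rfloor+1)$, and hence the $1/m\asymp 1-\lambda$ bound.

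The paper does not prove this statement itself; it is quoted from Pommerenke. Its proof of the generalisation, Theorem~C, uses the same two ingredients you do. It blocks so that the mixing error is at most $1/4$, and it applies mixing with the time-$0$ arc as $A$ and the whole future itinerary set as $E$.

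Where you differ is in how entropy is extracted. The paper uses only the one-sided bound $|A_k\cap g_j^{-1}(E)|\le\frac34|E|$ to show that every cylinder with $n\ge n_m$ has measure at most $\pi(3/4)^m$ (Claim~\ref{claim}). It then bounds $\sum|A_{k_0\ldots k_n}|\log(2\pi/|A_{k_0\ldots k_n}|)$ below by $2\pi\log\bigl(2\pi/\max|A_{k_0\ldots k_n}|\bigr)$. This avoids conditional entropy entirely and gives uniform control of every cylinder. Your chain-rule route needs two-sided control of the conditional probabilities, which the mixing supplies, plus the standard entropy identities. It is equally short and gives the same rate.

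Finally, in this uniform setting your fixed block length $m\asymp 1/(1-\lambda)$ suffices. The paper's variable-length blocks, chosen via the sums of the $\mu_k$ in \eqref{eq:blocks}, are what let it handle non-uniform $\lambda_n$.
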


More recently, Kawan \cite[Section~3.1]{Kawan} introduced the concept of \textit{metric entropy} for general non-autonomous dynamical systems. In our setting, this quantity is defined as in~\eqref{eq:lower-ent} but with $\limsup_{n\to\infty}$ rather than $\liminf_{n\to\infty}$, and is denoted by $h((f_n))$; thus $h((f_n))\ge \underline{h}((f_n))$ in general.

We use Theorem~A to generalise Pommerenke's result on lower entropy by removing the requirement for $(f_n)$ to be \textit{uniformly} contracting; that is, we allow $|f_n'(0)|$ to take values arbitrarily close to~1. We also give a sufficient condition for positive metric entropy.
\begin{thmC}
Let $f_n$ be centred inner functions with $\la_n=\vert f_n'(0)\vert \geq a>0$ and $\lambda_n \cdots \lambda_1 \to 0$ as $n \to \infty$, and put $\mu_n:=1-\la_n$ for n=1,2,\ldots.
\begin{itemize}
\item[(a)] If
\begin{equation}\label{eq:lower-entropy}
\limsup_{n\to\infty} \left(\la_n \cdots\la_1\right)^{1/n} < 1; \quad\text{that is,}\quad \underline\mu:=\liminf_{n\to\infty}\frac{1}{n}\sum_{k=1}^n \mu_k>0,
\end{equation}
then the lower entropy $\underline{h}$ of $(f_n)$ satisfies
\[
\underline{h}((f_n)) \geq c\underline\mu,
\]
where $c$ is a positive constant that depends only on~$a$.
\item[(b)] If
\begin{equation}\label{eq:entropy}
\liminf_{n\to\infty} \left(\la_n \cdots\la_1\right)^{1/n} < 1; \quad\text{that is,}\quad \mu:=\limsup_{n\to\infty}\frac{1}{n}\sum_{k=1}^n \mu_k>0,
\end{equation}
then the metric entropy~$h$ of~$(f_n)$ satisfies
\[
h((f_n)) \geq c\mu,
\]
where~$c$ is a positive constant that depends only on~$a$.
\end{itemize}
\end{thmC}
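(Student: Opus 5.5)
We follow Pommerenke's route from Theorem~\ref{thm:Pom3} to Theorem~\ref{Pomm-ent}, with Theorem~A in place of the uniform exponential mixing. Fix the partition $\mathcal P=\{A_1,\dots,A_N\}$ of $\partial\D$ into $N$ equal arcs, with $N=N(a)$ to be chosen. Write $P_n=\lambda_n\cdots\lambda_1$. The first step is to control the itinerary sets in blocks. Choose a gap $m\ge1$, and split the times $0,1,\dots,n$ into those times $j_0<j_1<\dots<j_r$ at which the accumulated product has dropped by a fixed factor; that is, require $\lambda_{j_{i+1}}\cdots\lambda_{j_i+1}\le \eta$ for a small $\eta=\eta(a)$. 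Since each $\lambda_k\ge a$, successive block products lie in $[a\eta,\eta]$. Also $\log(1/P_n)=\sum_k\log(1/\lambda_k)\ge \sum_k \mu_k$, so the number of blocks satisfies $r\gtrsim \sum_{k\le n}\mu_k/\log(1/(a\eta))$.

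The key estimate is applied to the refinement by the selected times only. By monotonicity of entropy under refinement, it suffices to bound from below
\[
H_n=\sum |B|\log\frac{2\pi}{|B|}, \qquad B=A_{k_0}\cap F_{j_1}^{-1}(A_{k_1})\cap\cdots\cap F_{j_r}^{-1}(A_{k_r}).
\]
We use the chain rule, $H_n=\sum_i H(\text{level } i\mid \text{levels}<i)$. The $i$-th conditional term is controlled by applying Theorem~A to the tail system $g_k=f_{j_i+k}$, $k\ge1$, which again consists of centred inner functions with $|g_k'(0)|\ge a$. Let $E$ be a set of the form $A_{k_{i}}\cap G^{-1}(\cdots)$ built from the later levels, where $G=f_{j_{i+1}}\circ\cdots\circ f_{j_i+1}$. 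Theorem~A for this tail gives, for each arc $A_k$,
\[
\Big|\frac{|A_k\cap G^{-1}(E)|}{|E|}-\frac{1}{N}\Big|\le K(a)\eta^{\delta}.
\]
Choosing $\eta$ small enough that $K\eta^\delta\le 1/(2N)$ makes $G^{-1}(E)$ equidistributed to within a factor $2$ among the $N$ arcs.

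The main obstacle is that the earlier-level sets $F_{j_i}^{-1}(\cdots)$ are not arcs. We handle this by working backwards, as Pommerenke does. We write each cylinder as $A_{k_0}\cap F_{j_1}^{-1}(C)$, where $C$ is a cylinder for the shifted system. Only the first level then needs to be an arc, and measure preservation ($|F^{-1}(C)|=|C|$) lets us induct on levels from the top. The induction gives $|B|\le (2/N)^{r+1}(2\pi)$ for every cylinder. Hence
\[
H_n\ge (r+1)\log(N/2)\ge c'(a)\sum_{k\le n}\mu_k
\]
once we take $N=4$. The tail systems need $\prod\to0$ to invoke Theorem~A; this holds by hypothesis, or we can apply the finite-$n$ version, since the bound in Theorem~A only involves the finite product.

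Finally, dividing by $n+1$ and taking $\liminf$ gives $\underline h\ge c\,\underline\mu$, which is part~(a). Taking $\limsup$ gives $h\ge c\mu$, which is part~(b). The equivalence of the two formulations of each hypothesis follows from $\log(1/\lambda)\asymp_a 1-\lambda$ for $\lambda\in[a,1)$.
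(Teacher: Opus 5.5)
Your proposal is correct and follows essentially the same route as the paper. Both arguments group the maps into blocks whose products are small enough that Theorem~A gives a uniform factor less than $1$, bound every itinerary cylinder geometrically in the number of blocks by backward induction ($A_{k_0}\cap g_1^{-1}(E_1)$, and so on), and count the blocks against $\sum_{k\le n}\mu_k$. The differences are cosmetic: you use four quarter-arcs rather than the paper's two half-circles (only because you rounded $3/(2N)$ up to $2/N$), you end blocks at a product threshold $\eta$ rather than a $\mu$-sum threshold, and your chain-rule remark is never actually used.
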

Note that the condition $\lim_{n\to\infty} \la_n \cdots\la_1 =0$ is equivalent to $\sum_{n=1}^\infty \mu_n =\infty$. Also, note that the statements that \eqref{eq:lower-entropy} and \eqref{eq:entropy} are both equivalences depends on the hypothesis that the terms $\lambda_n$ are bounded away from zero.

It is natural to ask to what extent the results in Theorem~C are best possible; that is, can the additional hypotheses on $\lambda_n$ in the two parts be weakened or perhaps omitted altogether?

In the opposite direction, it is natural to ask what condition, or combination of conditions, imply that one (or both) of the two entropies $\underline h((f_n))$ or $h((f_n))$ is zero. So far we have no answer. However, we can construct a non-autonomous dynamical system of centred inner functions which has topological entropy zero, which is potentially informative.

The generalisation of topological entropy to non-autonomous dynamical systems has been studied by numerous authors, going back at least to \cite{KolSno}. For the non-autonomous dynamical system of boundary values of sequences of centred inner functions $(f_n)$, with $F_n=f_n\circ\cdots\circ f_1$, the topological entropy can be defined as follows, by analogy with the standard definition for autonomous systems; see \cite[Chapter~4]{Walters}. A subset~$S$ of $\partial\D$ is called $(n, \delta)$\textit{-separated} for~$(f_n)$, where $n\ge 0$ and $\delta > 0$, if any two distinct points $\zeta, \zeta' \in S$ satisfy
\[
\max_{0\le k\le n}|F_k(\zeta)-F_k(\zeta')| \ge \delta.
\]
The \textit{topological entropy} $h_{\rm{top}}(f_n)$ of the sequence $(f_n)$ is then defined as
\[
h_{\rm{top}}((f_n)):= \lim_{\delta \to 0} \limsup_{n\to\infty}\frac1n\log N(n,\delta),
\]
where $N(n,\delta)$ denotes the maximum cardinality of an $(n,\delta)$-separated subset of $\partial\D$ for~$(f_n)$.

Within autonomous dynamics, it is a standard result that topological entropy dominates metric entropy; see \cite[p.~156]{Walters}, for example. In the non-autonomous setting, however, we are only aware of the result of Kawan \cite[Theorem~4.3]{Kawan} that this property holds under the additional assumption that the sequence $(f_n)$ is equicontinuous. However, the estimates developed in the proof of Theorem~C can be adapted to prove the following.
\begin{thmD}
If the sequence $(f_n)$ satisfies the hypotheses of Theorem~C, part~(b), then
\[
h_{\rm{top}}((f_n)) \ge  c\mu,
\]
where~$c$ is a positive constant that depends only on~$a$.
\end{thmD}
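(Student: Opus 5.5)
Under the hypotheses of Theorem~C(b), the plan is to build, for suitable $n$, a large $(n,\delta)$-separated set from the same partition estimates that give the metric entropy bound. Fix a partition of $\partial\D$ into $N$ equal arcs $A_1,\dots,A_N$, with $N$ to be chosen depending only on $a$. We then count how many cylinder sets $A_{k_0\ldots k_n}$ are nonempty and pick one point from each of a suitable subfamily. The metric entropy argument already gives, along a subsequence $n_j$ with $\frac1{n_j}\sum_{k\le n_j}\mu_k\to\mu$, a lower bound
\[
\sum |A_{k_0\ldots k_n}|\log\frac{2\pi}{|A_{k_0\ldots k_n}|}\ge 2\pi c\mu\, n .
\]
Since the entropy of a partition into $M$ nonempty pieces is at most $2\pi\log M$, the number $M_n$ of nonempty cylinders satisfies $\log M_n\ge c\mu n$ along $n_j$.

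The key step is to pass from cylinders to separated points. The difficulty is that two points in different cylinders may lie close to a common boundary point of the arcs $A_k$ at every time, so they need not be $\delta$-separated. To handle this, we shrink each arc: let $A_k'\subset A_k$ be the concentric subarc with $|A_k\setminus A_k'|=\eta|A_k|$. Then choose points only from the ``good'' cylinders
\[
A'_{k_0\ldots k_n}=A'_{k_0}\cap F_1^{-1}(A'_{k_1})\cap\cdots\cap F_n^{-1}(A'_{k_n}).
\]
Two points lying in different good cylinders differ in some index $m\le n$. At that time $F_m$ sends them into distinct shrunken arcs, which are at distance at least $\delta=\eta\pi/N$ apart. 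Hence one point from each nonempty good cylinder gives an $(n,\delta)$-separated set, and $N(n,\delta)$ is at least the number of nonempty good cylinders.

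The main obstacle is showing that the good cylinders still carry enough entropy. For this we would revisit the proof of Theorem~C and apply Theorem~A to the shrunken arcs, rather than merely cite the entropy bound. Theorem~A, applied with tail compositions $f_{m+\ell}\circ\cdots\circ f_{m+1}$, shows that preimages of arcs are asymptotically independent once the product $\lambda_{m+\ell}\cdots\lambda_{m+1}$ is small. This yields an upper bound of the form
\[
|A'_{k_0\ldots k_n}|\le \bigl((1+\epsilon)/N\bigr)^{r}\cdot 2\pi
\]
for all good cylinders. Here $r$ counts disjoint blocks of indices on each of which the product of the $\lambda$'s drops below a fixed threshold depending on $a$ and $N$. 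Since $\lambda_k\ge a$, the number of such blocks up to time $n$ is at least a constant multiple of $\sum_{k\le n}\mu_k$ by the relation $\log(1/\lambda)\asymp \mu$. Meanwhile, the total measure of the good set is at least $2\pi(1-\eta)^{r'}$ up to mixing errors, with $r'$ again proportional to the number of blocks (we sample only at block endpoints). Choosing $\eta$ small relative to $1/N$, the count of nonempty good cylinders is at least
\[
\frac{(1-\eta)^{r}}{\bigl((1+\epsilon)/N\bigr)^{r}}\ge e^{c' r}.
\]
Along $n_j$ this gives $\frac1{n_j}\log N(n_j,\delta)\ge c\mu$. Hence the $\limsup$ is at least $c\mu$ for each fixed small $\delta$, and letting $\delta\to0$ yields Theorem~D. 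The delicate bookkeeping is choosing the block structure so that the exponents $r$ and $r'$ match; I would try taking the itinerary only at block endpoints, which is legitimate because separation at any time $m\le n$ suffices.
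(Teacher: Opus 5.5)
Your argument is correct and has the same skeleton as the paper's proof. In both, the $f_n$ are grouped into blocks $g_j$ on which $K(\lambda_{n_j}\cdots\lambda_{n_{j-1}+1})^\delta$ is small, and the arcs are separated by a fixed gap (the paper takes just two, $\{{\rm Im}\,z\ge\delta\}$ and $\{{\rm Im}\,z\le-\delta\}$). Itineraries are read only at the block endpoints $n_j$, Theorem~A is applied to each $g_j$, and the number of blocks is converted into $\sum_{k\le n}\mu_k$ via \eqref{eq:mnmmu}.

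Where you differ is the counting step. You bound each good cylinder above and the whole good set below, then divide. This uses both sides of the mixing estimate and gives a count exponential in $r$ but smaller than $N^{r+1}$.

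The paper needs only the lower bound \eqref{eq:mixedgj}, $|A_k\cap g_j^{-1}(E)|\ge(|A_k|/2\pi-\tfrac14)|E|>0$. Applied in turn to $g_m,\dots,g_1$, it shows that every one of the $2^{m+1}$ block-endpoint itineraries has positive measure. So $N(n_m,\delta)\ge 2^{m+1}$ follows at once, with no ratio argument and none of the ``matching exponents'' bookkeeping you flagged.

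Two small remarks on your write-up. First, the opening step, which bounds the number of cylinders of positive measure via the metric entropy estimate, is never used and can be dropped. Second, define the good cylinders with block-endpoint times from the outset, as you eventually suggest. For itineraries at all times $0,\dots,n$, Theorem~A gives no lower bound on the measure of the good set comparable to $(1-\eta)^r$. With block endpoints only, your $r$ and $r'$ coincide automatically.
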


Finally, here is our example of a non-autonomous dynamical system of centred inner functions that has topological entropy zero.
\begin{thmE}
Let $(\lambda_n)$ be a sequence in $(0,1)$ such that $\sum_{n=1}^\infty (1-\lambda_n)<\infty$ and put
\[
f_n(z):=z\frac{z+\lambda_n}{1+\lambda_n z}, \quad n \in\N.
\]
Then the forward composition sequence $F_n:=f_n\circ\cdots\circ f_1$ has topological entropy zero on $\partial\D$.
\end{thmE}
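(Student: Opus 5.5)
The plan is to show that $N(n,\delta)$ grows subexponentially in $n$ for each fixed $\delta$. We compare each $f_n$ on $\partial\D$ with the identity, away from a shrinking neighbourhood of $-1$. Writing $\zeta=e^{i\theta}$, a direct computation gives
\[
|f_n'(e^{i\theta})| = 1+P_{\la_n}(\theta),\qquad P_{\la}(\theta)=\frac{1-\la^2}{|1+\la e^{i\theta}|^2}.
\]
Hence $f_n$ is an orientation-preserving degree-two covering of $\partial\D$ with $f_n(1)=1$ and $f_n(-1)=1$. Moreover, on $\{|\theta-\pi|\ge \eta\}$ its derivative is at most $1+C(1-\la_n)/\eta^2$. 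The key point is that the whole ``second sheet'' of $f_n$ is produced by an arc about $-1$ of length comparable to $1-\la_n$, on which $f_n$ is strongly expanding. Off a slightly larger arc, $f_n$ is $C^1$-close to the identity.

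\textbf{Step 1.} Choose shrinking arcs $U_n$ centred at $-1$ of half-length $\eta_n:=(1-\la_n)^{1/3}$. Then $\sum_n \sup_{\partial\D\setminus U_n}(|f_n'|-1)\le C\sum_n(1-\la_n)^{1/3}$. This sum need not converge, so I would refine the choice: take $\eta_n=\max\{(1-\la_n)^{1/3},\,\sigma_n\}$ with $\sigma_n\to 0$ slowly, chosen so that both $\sum_n(1-\la_n)/\eta_n^2<\infty$ and $\sum_n\eta_n$ is $o(n)$ along the relevant range. This is possible because $\sum(1-\la_n)<\infty$. With this choice, the product $\prod_{k}\bigl(1+C(1-\la_k)/\eta_k^2\bigr)$ is bounded by a constant $M$. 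Thus an arc $J$ whose forward images $F_{k-1}(J)$ avoid $U_k$ for all $k\le n$ satisfies $|F_k(J)|\le M|J|$ for all $k\le n$. Also, $f_k(U_k)$ covers the circle, but $f_k$ moves points of $\partial\D\setminus U_k$ by at most $C(1-\la_k)/\eta_k$, which is summable.

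\textbf{Step 2.} Order a maximal $(n,\delta)$-separated set $S$ cyclically. Consecutive points bound disjoint arcs $J$, and each such arc satisfies $\max_{k\le n}|F_k(J)|\ge\delta$. By Step~1, either $|J|\ge\delta/M$, and there are at most $2\pi M/\delta$ such arcs, or some image $F_{k-1}(J)$ meets $U_k$. In the second case I would cut the orbit of $J$ at the first such time $k$. At that time the image $F_{k-1}(J)$ still has length less than $\delta$ unless we are in the first case, so it is an arc of length less than $\delta$ meeting $U_k$. We then record which of boundedly many pieces of the partition $\{U_k,\ \partial\D\setminus U_k\}$ the subsequent iterates fall in. The aim is a bound of the form
\[
N(n,\delta)\le C(\delta)\,\#\{\text{admissible visit patterns to the }U_k\text{ up to time }n\}.
\]

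\textbf{Step 3 (main obstacle).} The hard part is bounding the number of visit patterns subexponentially. The naive count via the $2^{k-1}$ components of $F_{k-1}^{-1}(U_k)$ is exponential and must be avoided. What I would try first is a measure argument combined with separation. For each $k$, the set of points visiting $U_k$ at time $k$ has measure $|U_k|=2\eta_k$, by invariance of Lebesgue measure. An arc $J$ charged to time $k$ can be charged only once. Between visits, $J$ is stretched by at most $M$, so after a visit the image has length at least $\delta$ and a bounded number of sub-arcs suffices. The expected outcome is that the number of times at which a new separated pair is created is at most about $\#\{k\le n:\eta_k\ge c\delta\}+C(\delta)$, which is $o(n)$ because $\eta_k\to0$. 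Each such time contributes a factor bounded in terms of $\delta$ and $M$, so $\log N(n,\delta)=o(n)$, giving $h_{\rm top}((f_n))=0$. If this charging scheme fails, the fallback is to exploit that $1$ is a common, only weakly repelling fixed point ($|f_n'(1)|=1+(1-\la_n)/(1+\la_n)$, with summable excess) and that $f_n(-1)=1$. Under that approach, orbits entering $U_k$ are sent into a small neighbourhood of $1$, where they are trapped with bounded expansion for a long time. That trapping would give the required sparsity of visits.
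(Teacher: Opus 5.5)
Your Steps~1 and~2 are a reasonable reduction, but Step~3, which you yourself flag as the main obstacle, contains all the content of the theorem. It is not carried out, and the outcome you predict for it is false.

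Since $\eta_k\to0$, your predicted bound says that new separated pairs are created at only boundedly many times. But for every large $m$, take a short arc $I$ of length $<\delta$ about $-1$ that contains the $O(1-\lambda_m)$-neighbourhood of $-1$. Then $f_m$ spreads $I$ over almost all of $\partial\D$. Pull $I$ back by $F_{m-1}$ along a single inverse branch; this does not increase arc length, since $|f_k'|>1$ on $\partial\D$. This gives points $\zeta,\zeta'$ with $|F_j(\zeta)-F_j(\zeta')|<\delta$ for $j<m$ but $|F_m(\zeta)-F_m(\zeta')|\ge\delta$. So separated pairs are born at every time $m$, including all times with $\eta_m<c\delta$. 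Any scheme in which each such time contributes a multiplicative factor therefore gives only an exponential bound.

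The Lebesgue measure $2\eta_k$ of $F_{k-1}^{-1}(U_k)$ cannot rescue this, because measure does not control the cardinality of separated sets. Your fallback rests on a false claim. As you note in Step~1, $f_k(U_k)$ covers the whole circle, and only a sub-arc of length about $\epsilon(1-\lambda_k)$ about $-1$ is sent into the $\epsilon$-neighbourhood of~$1$. Finally, your constant $M$ controls stretching, whereas the complexity comes from folding (non-injectivity). Arcs can be stretched enormously near $-1$ without creating new orbit types, provided the maps are injective on them.

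The missing idea is a reason why the growth is additive rather than multiplicative: an orbit that has once left the shrinking neighbourhood of $-1$ must never be folded again. The paper gets this from a dynamically defined partition instead of fixed arcs $U_k$.

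In the half-plane model, $f_n$ acts on $\{iy:y>0\}$ as $y\mapsto\frac12(1+\lambda_n)y-\frac12(1-\lambda_n)y^{-1}$. Using $\sum(1-\lambda_n)<\infty$ there, Lemma~\ref{lem:cvgce} produces $\theta_0\in(0,\pi/2)$ such that $\arg F_n(e^{i\theta_0})$ increases to $\pi$ without ever reaching it. Put $A_0=\{e^{i\theta}:|\theta|\le\theta_0\}$ and $A_n=F_n(A_0)$. Then:
\begin{itemize}
\item each $f_n$ maps $A_{n-1}$ injectively and expandingly onto $A_n$;
\item the $A_n$ increase to $\partial\D\setminus\{-1\}$;
\item $B_n=\partial\D\setminus A_n$ shrinks to $\{-1\}$, with $f_n^{-1}(B_n)\subset B_{n-1}$.
\end{itemize}
Hence every itinerary with respect to $(A_k,B_k)$ has the form $B_0\ldots B_{m-1}A_m\ldots A_n$, so there are only $n+2$ classes.

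Within a class, choose $K(\delta)$ so that $|B_k|\le\delta/2$ for $k\ge K(\delta)$. A pigeonhole-and-degree argument over the first $K(\delta)$ steps leaves a subset, of proportion bounded below in terms of $\delta$, whose orbits stay $\delta/2$-close up to time $m-1$. Injectivity and expansion on the $A_k$ then force their $n$th images to be $\delta$-separated. So each class contributes at most $C(\delta)$ points, and $N(n,\delta)\le C(\delta)(n+2)$. Without a non-return structure of this kind, your count of visit patterns to the $U_k$ cannot be closed.
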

The sequence $(f_n)$ is not equicontinuous, since here we have $|f'_n(-1)|=2/(1-\lambda_n) \to \infty$ as $n\to\infty$. It is natural to ask whether the non-autonomous dynamical system in Theorem~E has metric entropy zero.

We mention two aspects of the non-autonomous dynamical system in Theorem~E, which contrast with the fact that this system has topological entropy zero. First, each of the individual maps~$f_n$, when restricted to $\partial \D$, has topological entropy $\log 2$, since it is conjugate there to $z\mapsto z^2$; see~\cite{Shub}. The metric entropy of each $f_n$ is also positive but strictly less than $\log 2$ by \cite[Theorem~4]{Martin}. Note that topological entropy of an autonomous dynamical system is a topological invariant but metric entropy is not in general.

Second, the convergence condition $\sum_{n=1}^\infty (1-\lambda_n)<\infty$ implies that the sequence $F_n=f_n\circ\cdots\circ f_1$ does not tend to the limit~0 in $\D$; see \cite[Theorem~2.1]{BEFRS}. Moreover, by \cite[Theorems~1.1 and~1.2]{Gustavo-inner}, the sequence $(F_n)$ converges locally uniformly in $\D$ to an inner function (even a Blaschke product), $F$ say, such that $F'(0) = \prod_{n=1}^\infty\lambda_n \in (0,1)$. Therefore the limit function~$F$, viewed as an autonomous dynamical system on $\partial\D$, has positive metric entropy by Theorem~\ref{Pomm-ent}, and hence positive topological entropy.

\begin{rem}\label{bigger-lambda}
In each of Theorems~A,~B,~C and~D the condition that $\la_n\geq a>0$ can be omitted by replacing $\lambda_n$ by $\lambda'_n=\frac12(1+\lambda_n)>1/2$ as an upper bound for $|f'_n(0)|$. This change replaces $\lambda_n\cdots\lambda_1$ in the estimates by $\lambda'_n\cdots \lambda'_1$, which is a larger product but one that also tends to~0 whenever $\lambda_n\cdots \lambda_1$ tends to~0. With this larger product in the estimates, the constants~$\delta$ and~$K$ are absolute.
\end{rem}

{\bf Acknowledgements} The authors thank Anna Miriam Benini for many helpful conversations.

Theorems A and~B were announced, with outline proofs, at a meeting in Liverpool in July 2024, and the other results at a meeting in London in early September 2026. The authors' sole use of generative~AI was to ask Copilot how to write TikZ code for the diagram.



\section{Preliminary results}
To prove Theorems~A and~B, we need the following simple result on infinite series; see \cite[p.120--121]{inequalities} for related results.

\begin{lem}\label{lem:exp}
Let $0<a_n<1$ for $n \geq 1$ and $c>0$. Then
\[ \sum_{n=1}^{\infty} \frac{a_{n+1}}{\exp(c(a_1+ \dots + a_n))} \leq \frac{e^c}{c}.\]
\end{lem}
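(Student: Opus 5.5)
Writing $s_n = a_1+\dots+a_n$ (with $s_0=0$), we compare the sum with an integral of $e^{-cx}$. Each term is $a_{n+1}e^{-cs_n}$, and $a_{n+1}=s_{n+1}-s_n$ is the length of the interval $[s_n,s_{n+1}]$. On that interval $x\le s_{n+1}=s_n+a_{n+1}<s_n+1$, because $a_{n+1}<1$. Hence $e^{-cs_n}\le e^{c}e^{-cx}$ for every $x\in[s_n,s_{n+1}]$. Integrating over the interval gives
\[
a_{n+1}e^{-cs_n}\le e^{c}\int_{s_n}^{s_{n+1}} e^{-cx}\,dx .
\]

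Summing over $n\ge 1$, the intervals $[s_n,s_{n+1}]$ are non-overlapping, since $s_n$ is increasing, and they lie in $[s_1,\infty)\subset[0,\infty)$. Therefore
\[
\sum_{n=1}^\infty \frac{a_{n+1}}{\exp(c(a_1+\dots+a_n))}\le e^{c}\int_{0}^{\infty}e^{-cx}\,dx=\frac{e^{c}}{c}.
\]
This holds whether or not $s_n\to\infty$; if the $s_n$ are bounded, the union of the intervals is simply a bounded subset of $[0,\infty)$, and the same estimate applies.

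The only point needing care is the bound $s_{n+1}-s_n<1$, which converts the left-endpoint value $e^{-cs_n}$ into a bound by the integral at the cost of the factor $e^{c}$. This is exactly where the hypothesis $a_n<1$ is used, and it explains the factor $e^c$ in the stated bound. The rest is routine.
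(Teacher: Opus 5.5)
Your proof is correct and is essentially the paper's argument. Since $\int_{s_n}^{s_{n+1}}e^{-cx}\,dx=(e^{-cs_n}-e^{-cs_{n+1}})/c$, your integral comparison is the continuous form of the paper's telescoping sum, and it rests on the same key inequality $ca_{n+1}e^{-ca_{n+1}}e^{-cs_n}\le e^{-cs_n}-e^{-cs_{n+1}}$, combined with $a_{n+1}<1$ to produce the factor $e^{c}$.
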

\begin{proof}
First,
\begin{eqnarray} \frac{1}{\exp(c(a_1+ \dots + a_n))} - \frac{1}{\exp(c(a_1+ \dots + a_{n+1}))}&=& \frac{\exp(ca_{n+1})-1}{\exp(c(a_1+ \dots + a_{n+1}))} \nonumber \\ &\geq& \frac{ca_{n+1}}{\exp(ca_{n+1}) \exp(c(a_1+ \dots + a_n))}.\nonumber
\end{eqnarray}
Next, telescoping cancellation gives
\[\sum_{n=1}^{\infty}\left(\frac{1}{\exp(c(a_1+ \dots + a_n))} - \frac{1}{\exp(c(a_1+ \dots + a_{n+1}))}\right) \le \frac{1}{e^{ca_1}}.\]
Combining these inequalities, we obtain
\[\frac{1}{e^{ca_1}} \geq \sum_{n=1}^{\infty} \frac{ca_{n+1}}{\exp(ca_{n+1}) \exp(c(a_1+ \dots + a_n))}\ge \frac{c}{e^c}\sum_{n=1}^{\infty} \frac{a_{n+1}}{\exp(c(a_1+ \dots + a_n))},\]
and the result follows.
\end{proof}

We also need L\"owner's lemma, which can be found in \cite[Proposition 4.15]{Pommerenke} and \cite[Theorem 7.1.8 and Proposition 7.1.4 part~(4)]{BracciContrerasDM}, in particular the case of equality for inner functions, which can be found in \cite{pommerenke-ergodic} and also in \cite[Corollary~1.5(b)]{doering-mane}, for example. Here we denote the harmonic measure of a Borel set~$A$ in the boundary of a domain~$U$ by $\omega(z, A, U)$, for $z\in U$.
\begin{lem}[L\"owner's lemma] \label{lem:Lowner}
Let~$f$ be a holomorphic self-map of~$\D$  and let $S\subset \partial \D$ be a Borel set. Then
\begin{equation}\label{eqtn:Ransford Phil}
\omega(z, f^{-1}(S), \D)\leq \omega(f(z), S,\D),\;\text{ for } z\in\D,
\end{equation}
with equality if $f$ is an inner function.
\end{lem}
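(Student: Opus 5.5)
This is the classical Löwner lemma, so the plan is to prove it from the Poisson representation of harmonic measure together with the maximum principle.

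\emph{Setting up the two sides.} Write $u(w)=\omega(w,S,\D)$. This is the Poisson integral of $\chi_S$, so $u$ is harmonic in $\D$, satisfies $0\le u\le 1$, and has non-tangential limits equal to $\chi_S$ almost everywhere on $\partial\D$ by Fatou's theorem. Set $v=u\circ f$. This function is harmonic in $\D$ with $0\le v\le 1$, so it is the Poisson integral of some boundary function $\phi\in L^\infty(\partial\D)$, where $\phi$ is its non-tangential limit almost everywhere. The left-hand side of \eqref{eqtn:Ransford Phil}, namely $\omega(z,f^{-1}(S),\D)$, is the Poisson integral of $\chi_{f^{-1}(S)}$, where $f^{-1}(S)$ is understood via the boundary values of $f$ (defined a.e.). It therefore suffices to compare $\phi$ with $\chi_{f^{-1}(S)}$ almost everywhere.

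\emph{The inequality.} At a.e.\ $\zeta\in f^{-1}(S)$, we have $f(\zeta)\in S\subset\partial\D$ with $|f(\zeta)|=1$. The aim is to show $\phi(\zeta)\ge 1=\chi_{f^{-1}(S)}(\zeta)$ at such points; since $\phi\ge 0$ everywhere, this gives $\phi\ge\chi_{f^{-1}(S)}$ a.e.

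The obstacle is that $f(r\zeta)$ tends to $f(\zeta)$ possibly tangentially, so the non-tangential limit of $u$ at $f(\zeta)$ is not directly available along this path. I would first treat open $S$, where the problem disappears. Then $\chi_S$ is lower semicontinuous, $u$ is the Poisson integral of a lower semicontinuous function, and so $\liminf_{w\to\xi}u(w)\ge 1$ for every $\xi\in S$, unrestricted. Hence $\phi(\zeta)=1$ at a.e.\ $\zeta\in f^{-1}(S)$.

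For closed $S$, I would apply the open case to $\partial\D\setminus S$. Since harmonic measures of complementary sets sum to $1$ for the disc, this turns the inequality around: it gives $\omega(z,f^{-1}(S),\D)\ge\omega(f(z),S,\D)$ for closed $S$ whenever $f$ is inner, because then $f^{-1}(\partial\D\setminus S)$ is complementary to $f^{-1}(S)$ up to a null set. For a general holomorphic self-map, the inequality for closed $S$ instead follows from decreasing limits of open sets $U_k\downarrow S$: the right side converges by monotone convergence, and the preimages satisfy $\bigcap_k f^{-1}(U_k)\supseteq f^{-1}(S)$. Arbitrary Borel $S$ is then handled by outer regularity. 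Approximate $S$ by open sets $U\supset S$ with $|U\setminus S|$ small; then $\omega(f(z),U)\to\omega(f(z),S)$, while $\omega(z,f^{-1}(S))\le\omega(z,f^{-1}(U))\le\omega(f(z),U)$.

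\emph{Equality for inner $f$.} When $f$ is inner, the boundary values of $f$ lie in $\partial\D$ almost everywhere, so $f^{-1}(S)$ and $f^{-1}(\partial\D\setminus S)$ partition $\partial\D$ up to a null set. Applying the inequality to $S$ and to its complement and adding the two gives
\[
1=\omega(z,f^{-1}(S))+\omega(z,f^{-1}(\partial\D\setminus S))\le\omega(f(z),S)+\omega(f(z),\partial\D\setminus S)=1 .
\]
Since the total is $1$ on both sides, both inequalities must be equalities, which yields equality in \eqref{eqtn:Ransford Phil}.

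The main obstacle is the boundary comparison in the open case, which relies on lower semicontinuity of Poisson integrals to control unrestricted limits; the extension to Borel sets is routine regularity.
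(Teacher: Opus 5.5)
Your proof is correct. The paper does not prove this lemma; it quotes it from Pommerenke's book and from Bracci--Contreras--D\'{\i}az-Madrigal, with the equality case attributed to Pommerenke's ergodic paper and to Doering--Ma\~n\'e.

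Your argument is the standard self-contained proof. You compare the bounded harmonic function $\omega(f(\cdot),S,\D)$ with the Poisson integral of $\chi_{f^{-1}(S)}$ through their a.e.\ radial limits. For open $S$ you use that $\omega(w,S,\D)\to 1$ as $w\to\xi\in S$ without restriction, which is exactly what makes a possibly tangential approach of $f(r\zeta)$ to $f(\zeta)$ harmless. You then pass to arbitrary Borel sets by outer regularity. Finally, for inner $f$ you get equality by adding the inequalities for $S$ and $\partial\D\setminus S$: both sides then sum to $1$, because the boundary values lie on $\partial\D$ a.e.

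Compared with citing, your route has two advantages. It gives the result directly in the harmonic-measure form used in the proof of Theorem~A, at an arbitrary point $z$ and with no normalisation $f(0)=0$. And since it works only with preimages under the a.e.-defined boundary map, it never meets measurability questions for images of sets.

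Two small remarks:
\begin{itemize}
\item Your paragraph on closed sets is redundant. The outer-regularity step already passes directly from open sets to arbitrary Borel sets, and the equality step does not need the closed case.
\item In the outer-regularity step, the convergence $\omega(f(z),U,\D)\to\omega(f(z),S,\D)$ should be justified explicitly. At the fixed point $w=f(z)$ one has $\omega(w,U\setminus S,\D)\le \frac{1+|w|}{1-|w|}\cdot\frac{|U\setminus S|}{2\pi}$, which gives it at once.
\end{itemize}
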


\section{Proof of Theorem~B}
We prove a somewhat more general version of Theorem~B in which the exponent in the denominator can be arbitrarily close to $2/a$ at the expense of an even larger constant~$K$.
\begin{thm}\label{thm:B2}
For $n\ge 1$, let $f_n$ be centred inner functions with $|f_n'(0)|\le \lambda_n$, where \\$0<a\le \lambda_n<1$ and $\lambda_n \cdots \lambda_1 \to 0$ as $n \to \infty$. Also, let $\eps\in (0,1]$.

Then $F_n= f_n \circ \cdots \circ f_1$ satisfies
\begin{equation}\label{eq:est2a}
|F_n(z)| \leq K \lambda_n \cdots \lambda_1 \frac{|z|}{(1-|z|)^{p}} ,\quad \text{for} \; z \in \mathbb{D},
\end{equation}
where $p=2/a+\eps$ and $K=K(a,\eps)=\exp(48e^{1/2}/(a^2\eps))$, for $0<a < 1$.
\end{thm}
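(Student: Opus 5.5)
The proof has two stages. The first is an exponential bound of Pommerenke type; the second bootstraps it to a polynomial bound in $1/(1-|z|)$.

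The basic tool is the Schwarz-type estimate for a centred holomorphic self-map $f$ of $\D$ with $|f'(0)|\le\lambda$. Writing $f(z)=zg(z)$, we have $|g|\le 1$ and $|g(0)|\le\lambda$, so Schwarz--Pick applied to $g$ gives
\[
|f(z)|\le |z|\,\frac{|z|+\lambda}{1+\lambda|z|}.
\]
Put $r_0=|z|$ and $r_n=|F_n(z)|$. Then
\[
\frac{r_{n}}{r_{n-1}}\le \frac{r_{n-1}+\lambda_n}{1+\lambda_n r_{n-1}}=\lambda_n\,\frac{1+r_{n-1}/\lambda_n}{1+\lambda_n r_{n-1}}\le \lambda_n\bigl(1+r_{n-1}/a\bigr),
\]
so that
\[
\frac{r_n}{\lambda_n\cdots\lambda_1 r_0}\le\prod_{k<n}\Bigl(1+\frac{r_k}{a}\Bigr)\le \exp\Bigl(\frac1a\sum_k r_k\Bigr).
\]

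It therefore suffices to bound $\sum_k r_k$. We use the complementary form of the same inequality: $1-r_n\ge (1-r_{n-1})\,\frac{1-\lambda_n}{1+\lambda_n r_{n-1}}+\dots$. More usefully, $1-r_n\ge (1-r_{n-1})\bigl(1+\mu_n c(1-r_{n-1})\bigr)$ roughly, where $\mu_n=1-\lambda_n$. This says the orbit leaves the boundary at a rate governed by $\mu_n$. As a result, $r_k$ stays close to $1$ only for a number of steps with total $\mu$-mass about $1/(1-r_0)$, after which $r_k$ decays like $\exp(-c\sum\mu_j)$. Lemma~\ref{lem:exp}, with $a_n=\mu_n$, then controls the tail sum of $r_k$ weighted appropriately. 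This gives a first estimate of the form
\[
r_n\le \lambda_n\cdots\lambda_1\,|z|\,\exp\Bigl(\frac{C}{a(1-|z|)}\Bigr).
\]
This is a slightly improved version of Pommerenke's bound from \cite{PommComps}.

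The second stage is the bootstrap, which I expect to be the main obstacle. The exponential factor must be converted into a power $(1-|z|)^{-p}$ with $p$ close to $2/a$. The plan is to split the orbit at the first time $m$ at which $r_m\le \rho$, for a fixed $\rho$ depending on $a$ and $\eps$. Up to time $m$ we do not use the crude product bound. Instead we use the bound
\[
\frac{1-r_k}{1-r_{k-1}}\ge\frac{1+\lambda_k r_{k-1}}{\lambda_k+r_{k-1}}\cdot\ldots,
\]
which tracks the hyperbolic growth $\log\frac1{1-r_k}$. Using $\lambda_k\ge a$, this shows that the product $\prod_{k\le m}\frac{r_k}{\lambda_k r_{k-1}}$ is at most about $\bigl(\frac{1-\rho}{1-r_0}\bigr)^{2/a+\eps}$. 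The point is that each factor satisfies $\log\bigl(1+\frac{1-\lambda}{\lambda}r\bigr)\le \frac{1}{a}\cdot$(the decrement of $\log(1-r)$), up to a factor $2$ coming from the estimate $1-r^2\approx 2(1-r)$. After time $m$ we apply the first-stage estimate starting from $r_m\le\rho$. Since $1/(1-\rho)$ is bounded there, this contributes only a constant, which, after optimising $\rho$ against $\eps$, gives $K=\exp\bigl(48e^{1/2}/(a^2\eps)\bigr)$. The delicate point is obtaining exactly the exponent $2/a+\eps$ in the comparison of multiplicative growth against hyperbolic displacement. I would first try to prove, for $r$ near $1$, the per-step inequality
\[
\frac{r'}{\lambda r}\le\Bigl(\frac{1-r}{1-r'}\Bigr)^{2/a+\eps},
\]
where $r'\le\frac{r+\lambda}{1+\lambda r}$, by direct calculus. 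The pieces are then combined by multiplying along the orbit.
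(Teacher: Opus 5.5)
Your overall plan matches the paper's: an exponential first-stage bound via Lemma~\ref{lem:exp}, then splitting the orbit at its first entry into $\{r\le b\}$ and comparing $r'/(\lambda r)$ with a power of $(1-r')/(1-r)$ near the boundary. However, the first stage does not work as written. The estimate $\frac{1+r/\lambda_n}{1+\lambda_n r}\le 1+r/a$ discards the factor $1-\lambda_n$. The resulting reduction (``it suffices to bound $\sum_k r_k$'') then fails in exactly the non-uniform setting of the theorem. Take $\lambda_k=k/(k+1)$, so that $a=\tfrac12$ and $\lambda_n\cdots\lambda_1=1/(n+1)$, and $f_k=\psi_k$. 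Then $\psi_k(r)\ge\lambda_k r$ gives $r_k\ge r_0/(k+1)$, so $\sum_k r_k=\infty$ and $\exp\bigl(\frac1a\sum_{k<n}r_k\bigr)$ is unbounded in $n$.

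Lemma~\ref{lem:exp} only controls $\mu$-weighted sums, so you must keep the weight from the start: $\frac{1+r/\lambda_n}{1+\lambda_n r}\le 1+\frac{1-\lambda_n^2}{\lambda_n}r\le 1+\frac{2\mu_n}{a}r$. This gives $r_n\le\lambda_n\cdots\lambda_1 r_0\exp\bigl(\frac2a\sum_{k<n}\mu_{k+1}r_k\bigr)$. The decay you need is then simply $r_k\le r_{k-1}\bigl(1-\frac{1-r_{k-1}}{1+\lambda_k r_{k-1}}\mu_k\bigr)\le r_{k-1}(1-c\mu_k)$ with $c=\frac12(1-r_0)$, using $r_{k-1}\le r_0$. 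Hence $r_k\le r_0e^{-c(\mu_1+\cdots+\mu_k)}$, and the telescoping argument of Lemma~\ref{lem:exp} bounds $\sum_{k\ge0}\mu_{k+1}e^{-c(\mu_1+\cdots+\mu_k)}$ by $e^{c}/c\le 2e^{1/2}/(1-r_0)$. This yields the first-stage constant $C=4e^{1/2}$, which you need in order to reach the stated $K$. Your heuristic about the $\mu$-mass spent near $\partial\D$ is not needed.

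Second, your displayed per-step inequality is inverted, and as written it is false. Since $r'<r$, the right-hand side $\bigl(\frac{1-r}{1-r'}\bigr)^{p}$ is less than $1$, whereas $\psi(r)/(\lambda r)\to1/\lambda>1$ as $r\to1$. The constraint should also read $r'\le r\,\frac{r+\lambda}{1+\lambda r}$. What your telescoping remark actually uses is $\frac{r'}{\lambda r}\le\bigl(\frac{1-r'}{1-r}\bigr)^{p}$ for $r>b$. By monotonicity in $r'$ it suffices to take $r'=\psi(r)$. Then $\frac{r'}{\lambda r}<\frac1\lambda\le1+\frac{\mu}{a}$, while $\frac{1-\psi(r)}{1-r}=\frac{1+r}{1+\lambda r}=1+\frac{\mu r}{1+\lambda r}>1+\bigl(\tfrac12-2(1-b)\bigr)\mu$. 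Bernoulli's inequality finishes provided $\frac1a<p\bigl(\tfrac12-2(1-b)\bigr)$. With $b=1-a\eps/12$ this reduces to $a\eps<1$, and $\exp\bigl(4e^{1/2}/(a(1-b))\bigr)=\exp\bigl(48e^{1/2}/(a^2\eps)\bigr)$ is exactly the stated $K$.

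With these two repairs your argument is the paper's. The only difference is that you run the splitting forward along $r_k=|F_k(z)|$, while the paper runs it backward along the majorant orbit $r_{k+1}=\psi_k(r_k)$; the two are equivalent.
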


The proof of Theorem~\ref{thm:B2} is based on the auxiliary functions
\begin{equation}\label{eq:psin}
\psi_n(z):=z \frac{z+\lambda_n}{1+\lambda_n z}\quad\text{and}\quad \Psi_n:= \psi_n \circ \cdots \circ \psi_1,
\end{equation}
with $\Psi_0(z)=z$. Since $|f'_n(0)|\le \lambda_n$, we have
\[
|f_n(z)| \leq|z| \frac{|z|+\lambda_n}{1+\lambda_n|z|}= \psi_n(|z|),\quad \text{for }z\in\D, n\ge 1;
\]
see, for example, \cite[p.217]{BC92} or \cite[Corollary~2.4]{BEFRS}. As each $\psi_n$ is increasing on $[0,1]$, we have
\begin{equation}\label{eq:fn-Psin}
|F_n(z)| \leq (\psi_n \circ \cdots \circ \psi_1)(|z|)=\Psi_n(|z|),\quad\text{for } z\in\D.
\end{equation}

We prove Theorem~\ref{thm:B2} by estimating $\Psi_n(|z|)$ in two stages, first obtaining two asymptotically weaker inequalities.
\begin{lem}\label{lem:weakineq}
For $n\ge 1$, let $\psi_n$ and $\Psi_n$ be defined by \eqref{eq:psin}, where $0<a\le \lambda_n<1$ and $\lambda_n \cdots \lambda_1 \to 0$ as $n \to \infty$. Put $\mu_n=1-\lambda_n$ for $n\in\N$. Then, for $z\in\D$ and $n\ge 1$,
\begin{equation}\label{eq:befrs}
\Psi_n(|z|)\le |z| \prod_{k=1}^n(1-c(z)\mu_k),
\end{equation}
where $c(z)=\frac12(1-|z|)$ and
\begin{equation}\label{eq:est1}
\Psi_n(|z|) \leq \lambda_n \cdots \lambda_1 |z| \exp \left(\frac{4e^{1/2}/a}{1-|z|}\right).
\end{equation}
\end{lem}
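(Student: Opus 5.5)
The plan is to prove both inequalities by tracking the ratios $\psi_k(r)/r$ and $\psi_k(r)/(\lambda_k r)$ along the real orbit $r_k:=\Psi_k(r)$, where $r=|z|$ and $r_0=r$. Since $\psi_k(r)\le r$ on $[0,1)$, the sequence $(r_k)$ is non-increasing, so $r_k\le r$ for all $k$.

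For \eqref{eq:befrs}, start from the identity
\[
\frac{\psi_k(s)}{s}=\frac{s+\lambda_k}{1+\lambda_k s}=1-\frac{\mu_k(1-s)}{1+\lambda_k s},
\]
and bound the last fraction below by $\tfrac12\mu_k(1-s)$, using $1+\lambda_k s\le 2$. Applying this at $s=r_{k-1}$, and using $1-r_{k-1}\ge 1-r$, gives $r_k\le r_{k-1}(1-c(z)\mu_k)$ with $c(z)=\tfrac12(1-r)$. Induction on $k$ then yields \eqref{eq:befrs}. Since $1-x\le e^{-x}$, this also gives the form used below:
\[
r_{k-1}\le r\exp\bigl(-c(z)(\mu_1+\cdots+\mu_{k-1})\bigr).
\]

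For \eqref{eq:est1}, write
\[
\frac{\psi_k(s)}{\lambda_k s}=\frac{s+\lambda_k}{\lambda_k(1+\lambda_k s)}=1+\frac{s(1-\lambda_k^2)}{\lambda_k(1+\lambda_k s)}\le 1+\frac{s\mu_k(1+\lambda_k)}{\lambda_k}\le \exp\Bigl(\frac{2\mu_k s}{a}\Bigr),
\]
where the last step uses $\lambda_k\ge a$ and $1+\lambda_k\le 2$. Taking $s=r_{k-1}$ and multiplying over $k=1,\dots,n$ gives
\[
\Psi_n(r)\le \lambda_n\cdots\lambda_1\, r\exp\Bigl(\frac2a\sum_{k=1}^n \mu_k r_{k-1}\Bigr).
\]
Inserting the exponential bound on $r_{k-1}$ from the first part, and using $r\le1$, it remains to show
\[
\sum_{k\ge1}\mu_k e^{-c(\mu_1+\cdots+\mu_{k-1})}\le \frac{e^c}{c}, \qquad c=c(z)=\tfrac12(1-r).
\]
Once this holds, the bound $e^c\le e^{1/2}$ gives the exponent $\frac2a\cdot\frac{2e^{1/2}}{1-r}=\frac{4e^{1/2}/a}{1-r}$, which is exactly \eqref{eq:est1}.

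The main point of care is the sum bound. Lemma~\ref{lem:exp}, applied with $a_n=\mu_n\in(0,1)$, only handles the terms with $k\ge2$. Adding the $k=1$ term $\mu_1$ separately would leave an unwanted extra $2/a$ in the exponent. Instead, rerun the telescoping argument from the proof of Lemma~\ref{lem:exp} with the empty partial sum $S_0=0$ included. Each difference $e^{-cS_{k-1}}-e^{-cS_k}$, where $S_k=\mu_1+\cdots+\mu_k$, is at least $c\mu_k e^{-c\mu_k}e^{-cS_{k-1}}\ge c e^{-c}\mu_k e^{-cS_{k-1}}$. These differences telescope to at most $1$, which gives the full sum bound $e^c/c$.
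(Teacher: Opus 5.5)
Your proof is correct and follows the paper's argument essentially step for step. It uses the same rearrangement of $\psi_k(s)/s$ for \eqref{eq:befrs}, the same bound $\psi_k(s)\le \lambda_k s\bigl(1+\frac{1-\lambda_k^2}{\lambda_k}s\bigr)$ combined with the exponential decay of the orbit, and the telescoping estimate of Lemma~\ref{lem:exp}. You are right that the sum actually needed contains the initial term $\mu_1$, whose partial sum is empty, and that Lemma~\ref{lem:exp} as stated starts one term later. Rerunning its telescoping proof from $S_0=0$, so that the differences sum to at most $1$, is exactly the right way to close that small gap in the paper's citation.
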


\begin{proof}[Proof of Lemma~\ref{lem:weakineq}]
The estimate \eqref{eq:befrs} was given in the proof of \cite[Theorem~2.5]{BEFRS} using the following argument: for $z\in\D$,
\begin{align}\label{befrs-arg}
\Psi_n(|z|) & = \psi_n(\Psi_{n-1}(|z|))\notag\\
&=\Psi_{n-1}(|z|)\left(\frac{\Psi_{n-1}(|z|)+\lambda_{n}}{1+\lambda_{n}\Psi_{n-1}(|z|)}\right)\notag\\
& = \Psi_{n-1}(|z|)\left(1-\frac{1-\Psi_{n-1}(|z|)}{1+\lambda_n \Psi_{n-1}(|z|)}\,\mu_n\right)\notag\\
&\le \Psi_{n-1}(|z|)(1-c(z)\mu_n),
\end{align}
where $c(z)=\tfrac12 (1-|z|)$. The final inequality holds because $\lambda_{n} \Psi_{n-1}(|z|)<1$ and because $\Psi_{n-1}(|z|) \leq |z|$ by Schwarz's lemma.

Therefore, since $\Psi_0(z)=z$, we have
\[
\Psi_n(|z|)\le |z| \prod_{k=1}^n(1-c(z)\mu_k),\quad\text{for } z\in\D,
\]
which is \eqref{eq:befrs}.

Since $e^{-x} \geq 1-x,$ for $0 \leq x \leq 1$, we obtain from \eqref{eq:befrs} that
\begin{equation}\label{ineq:Psi-n}
|\Psi_n(z)| \leq |z| \exp(-c(z)(\mu_1+ \dots +\mu_n)),\quad\text{for } z\in\D, n\ge 1.
\end{equation}
Observe that
\[
\sum_{n=1}^\infty \mu_n = \infty,
\]
since $\lambda_n \cdots \lambda_1 \to 0$ as $n \to \infty$ and so, for $z \in \mathbb{D},$ we have
\[
\Psi_n(z) \to 0 \; \text{ as}\;n \to \infty.
\]
We are seeking a bound for $\Psi_n(|z|)$ involving the product $\lambda_n\cdots \lambda_1=(1-\mu_n)\cdots(1-\mu_1)$ rather than the product $(1-c(z)\mu_n)\cdots(1-c(z)\mu_1)$ in \eqref{befrs-arg}. To obtain this, we replace the rearrangement and estimate in \eqref{befrs-arg} by the following
\begin{align}\label{eq:new-arg}
\Psi_n(|z|) & = \psi_n(\Psi_{n-1}(|z|))\notag\\
&=\Psi_{n-1}(|z|)\left(\frac{\Psi_{n-1}(|z|)+\lambda_{n}}{1+\lambda_{n}\Psi_{n-1}(|z|)}\right)\notag\\
& =  \lambda_n\Psi_{n-1}(|z|)\left(\frac{1+\Psi_{n-1}(|z|)/\lambda_n}{1+\lambda_n \Psi_{n-1}(|z|)}\right)\notag\\
& \le \lambda_n\Psi_{n-1}(|z|)\left(1+(1/\lambda_n-\lambda_n)\Psi_{n-1}(|z|)\right)\quad\text{(since }0<\lambda_n<1)\notag\\
& = \lambda_n\Psi_{n-1}(|z|)\left(1+\frac{1-\lambda_{n}^2}{\lambda_{n}}\Psi_{n-1}(|z|)\right).
\end{align}

Applying \eqref{eq:new-arg} repeatedly, substituting \eqref{ineq:Psi-n}, and using Lemma~\ref{lem:exp} together with the hypothesis that $a\le\lambda_n <1$, for $n\ge 1$, we obtain
\begin{eqnarray}
\Psi_n(|z|) &\leq& \lambda_n\cdots \lambda_1|z| \prod_{k=0}^{n-1} \left(1+ \frac{1-\lambda_{k+1}^2}{\lambda_{k+1}}\Psi_k(|z|)\right) \nonumber \\
&\leq& \lambda_n\cdots \lambda_1|z| \prod_{k=0}^{n-1}\left(1+ \frac{(2/a)\mu_{k+1}|z|}{\exp(c(z)(\mu_1+\dots+\mu_k)}\right)\nonumber \\
&\leq& \lambda_n\cdots \lambda_1|z| \exp \left(\sum_{k=0}^{n-1} \frac{(2/a)\mu_{k+1}}{\exp(c(z)(\mu_1+\dots+\mu_k)}  \right) \nonumber \\
&\leq &\lambda_n\cdots \lambda_1|z| \exp\left(\frac{2e^{c(z)}/a}{c(z)}\right)\nonumber\\
& \le& \lambda_n\cdots \lambda_1 |z| \exp \left(\frac{4e^{1/2}/a}{1-|z|}\right),\nonumber
\end{eqnarray}
since $c(z)= \frac{1}{2}(1-|z|)\le 1/2$. This proves~\eqref{eq:est1}.
\end{proof}
We will use the estimate \eqref{eq:est1} to prove Theorem~\ref{thm:B2} by applying \eqref{eq:est1} on a certain closed disc in $\D$ to give a bound for $\Psi_n$ on that disc of the required form, and we then extend this bound to the whole of~$\D$ by a process of backward iteration.
\begin{proof}[Proof of Theorem~\ref{thm:B2}]
To prove the estimate~\eqref{eq:est2a}, where $p=p(a,\eps)=2/a+\eps$ is the exponent in the denominator, we choose $b=b(a,\eps)$ such that
\begin{equation}\label{eq:bchoice}
0<4(1-b)<1-\frac{2}{ap}=\frac{a\eps}{2+a\eps};
\end{equation}
for example, we can take $b=1-a\eps/12$, since $a \in (0,1)$ and $\eps\in (0,1]$. The reason for these choices of~$p$ and~$b$ will become clear later in the proof.

Next we observe that
\[
\exp \left(\frac{4e^{1/2}/a}{1-|z|}\right) \le K \frac{1}{(1-|z|)^p},\quad\text{for } |z| \leq b=1-a\eps/12,
\]
where
\[
K=K(a,\eps)=\exp \left(\frac{48e^{1/2}}{a^2\eps}\right).
\]
This holds since this value of~$K$ satisfies $\log K+\log(1/x^p) \ge (4e^{1/2}/a)/x$ for $a\eps/12 \le x\le 1$. Therefore, by \eqref{eq:est1},
\begin{equation}\label{eq:bound-inside-b}
\Psi_n(|z|) \leq K\lambda_n \cdots \lambda_1 \frac{|z|}{(1-|z|)^p}, \quad\text{for } |z| \leq b.
\end{equation}

Now let $\Psi_{n,m}:= \psi_n \circ \cdots \circ \psi_{m+1}$, for $n > m \geq 0$. By relabelling the $\psi_n$ sequence, the estimate \eqref{eq:est1} shows that, for $n>m\ge 0$, with the constant $K=K(a,\eps)$ defined as above,
\begin{align}\label{eq:uniform constant}
\Psi_{n,m}(|z|) &\leq \lambda_n \cdots \lambda_{m+1}|z| \exp \left(\frac{4e^{1/2}/a}{1-|z|}\right),\quad\text{for }z\in\D,\notag\\
&\le K \lambda_n \cdots \lambda_{m+1} \frac{|z|}{(1-|z|)^p},\quad\text{for } |z| \leq b.
\end{align}

To extend the bound \eqref{eq:bound-inside-b} to the whole of $\D$, we take any $r_1 \in (b,1)$ and define the sequence $(r_k)$ by $r_{k+1}=\psi_k(r_k)$, $k=1,2,\dots$. Then $(r_k)$ is decreasing with $r_k \to 0$ as $k \to \infty$, by \eqref{eq:befrs}, since $\lambda_n \cdots \lambda_1 \to 0$ as $n\to\infty$. Therefore, we can choose~$m\in\N$ so that $r_{m+1} \leq b < r_m$ and by applying \eqref{eq:uniform constant} we obtain, for $n>m\ge 0$, that
\[
\Psi_{n,m}(r_{m+1}) \leq K\lambda_n \cdots \lambda_{m+1} \frac{r_{m+1}}{(1-r_{m+1})^p},
\]
which can be rewritten as
\begin{equation}\label{eq:r_m1}
\Psi_{n,m-1}(r_{m}) \leq K\lambda_n \cdots \lambda_{m+1} \frac{\psi_m(r_m)}{(1-\psi_m(r_{m}))^p}.
\end{equation}
We wish to deduce from \eqref{eq:r_m1} that, for $n>m\ge 0$,
\begin{equation}\label{eq:r_m2}
\Psi_{n,m-1}(r_{m}) \leq K\lambda_n \cdots \lambda_{m} \frac{r_m}{(1-r_{m})^p},
\end{equation}
with the same constant~$K$. It suffices to show that
\begin{equation}\label{eq:suf}
\frac{\psi_m(r_m)}{(1-\psi_m(r_{m}))^p} \le \lambda_{m} \frac{r_m}{(1-r_{m})^p},\quad\text{that is,} \quad\frac{r_{m+1}}{\lambda_mr_m} \le \left(\frac{1-r_{m+1}}{1-r_m}\right)^p.
\end{equation}

To do this we use the Taylor expansion of $\psi_m$ about 1. We have $\psi_m(1)=1$,
\[ \psi_m'(z) = \frac{\lambda_m z^2+2z+\lambda_m}{(1+\lambda_mz)^2}, \quad \text{so }\; \psi_m'(1)=\frac{2}{1+ \lambda_m},\]
and
\[\psi_m''(z)= \frac{2(1-\lambda_m^2)}{(1+\lambda_m z)^3}.\]
By Taylor's theorem,
\[r_{m+1}=\psi_m(r_m)= \psi_m(1)+ \psi_m'(1)(r_m-1)+ \frac{1}{2!}\psi_m''(\xi_m)(r_m-1)^2,\]
for some $\xi_m \in (r_m,1)$. Thus
\[
1-r_{m+1}= \frac{2}{1+\lambda_m}(1-r_m) - \frac{1-\lambda_m^2}{(1+\lambda_m \xi_m)^3} (1-r_m)^2;
\]
that is,
\[
\frac{1-r_{m+1}}{1-r_m}=   \frac{2}{1+\lambda_m}- \frac{1-\lambda_m^2}{(1+\lambda_m \xi_m)^3} (1-r_m).
\]
Since $b<r_m<\xi_m<1$ and $0<\lambda_m <1$, we deduce that
\begin{align}\label{eq:lambda1}
\frac{1-r_{m+1}}{1-r_m}&= 1 + \left( \frac{1}{1+\lambda_m}- \frac{(1-r_m)(1+\lambda_m)}{(1+\lambda_m \xi_m)^3}\right)(1-\lambda_m)\notag\\
&> 1+\left(\frac12-2(1-b)\right)(1-\lambda_m).
\end{align}

On the other hand, since $r_{m+1}<r_m$ and $\lambda_m\ge a$, we have
\begin{equation}\label{eq:lambda2}
\frac{r_{m+1}}{\lambda_m r_m}< \frac{1}{\lambda_m}=1+ \frac{1-\lambda_m}{\lambda_m}\le 1 + \frac1a(1-\lambda_m).
\end{equation}
Now, the choice of $b=b(a,\eps)$ in \eqref{eq:bchoice} was made so that
\[
4(1-b)<1-\frac{2}{ap},\quad\text{that is,}\quad\frac1a < p\left(\frac12-2(1-b)\right).
\]
Hence, by \eqref{eq:lambda1} and \eqref{eq:lambda2}, and the fact that $p>1$, we have
\begin{align*}
\frac{r_{m+1}}{\lambda_m r_m} &\le 1 + \frac1a(1-\lambda_m)\\
&< 1 + p\left(\frac12-2(1-b)\right)(1-\lambda_m)\\
&< \left( 1 + \left(\frac12-2(1-b)\right)(1-\lambda_m) \right)^p\\
&< \left( \frac{1-r_{m+1}}{1-r_m}\right)^p,
\end{align*}
which is (\ref{eq:suf}).

Thus we have deduced (\ref{eq:r_m2}) from (\ref{eq:r_m1}). Since we have $b<r_k<1$, $r_{k+1}<r_k$ and $a\le \lambda_k<1$, for $k=1, \ldots, m-1$, we can repeat this process~$m-1$ times to obtain
\[
\Psi_{n,0}(r_1)= \psi_n \circ \cdots \circ \psi_1 (r_1) \leq K\lambda_n \cdots \lambda_1 \frac{r_1}{(1-r_1)^p}.
\]
Thus we deduce by \eqref{eq:fn-Psin} that, for $|z|=r_1,$
\[
|F_n(z)| \leq \Psi_{n,0}(r_1) \leq K\lambda_n \cdots \lambda_1 \frac{|z|}{(1-|z|)^p},
\]
where $K=K(a,\eps)$. Since $r_1\in (b,1)$ was arbitrary, the proof of Theorem~\ref{thm:B2} is complete.
\end{proof}
\begin{rem*}
It follows from a result in \cite[Section~3]{PommComps} that under the hypotheses of Theorem~\ref{thm:B2} the functions
\[
G_n(z):=\frac{(\psi_n\circ\cdots\circ \psi_1)(z)}{\lambda_n\cdots \lambda_1}=\frac{\Psi_n(z)}{\lambda_n\cdots \lambda_1},
\]
where $\psi_n$ and $\Psi_n$ are defined by \eqref{eq:psin}, converge locally uniformly in~$\D$ to a limit function of the form $G(z)=z+\cdots$, which is univalent in the disc $\{z:|z|<a/(1+\sqrt{1-a^2}\,)\}$. Using the Koebe distortion theorem (see \cite[Theorem~1.3]{Pommerenke}, for example), a uniform bound on $|G(z)|$ can then be obtained in a smaller disc. This bound can then be used to give an upper bound for $|G_n(z)|$ and therefore for $|F_n(z)|$ of the required form, for~$z$ in that smaller disc. However, for values of~$a$ that are not close to~1, it seems problematic to transfer this upper bound on $|F_n(z)|$ from that smaller disc to the whole of~$\D$ by backward iteration, as we did in the proof of Theorem~\ref{thm:B2} using the bound given by \eqref{eq:est1}, applied on a disc that is close to the size of $\D$.

On the other hand, for values of~$a$ that are close to~1, the fact that the limit function~$G$ defined above is univalent on a disc that approximates~$\D$ suggests that the exponent $2/a+\eps$ in the denominator of \eqref{eq:est2a} is close to being best possible.
\end{rem*}


\section{Proof of Theorem~A}
Theorem~A states that if $f_n$ are inner functions with $f_n(0)=0$ and $|f_n'(0)|\le \lambda_n$, where $0<a\le \lambda_n <1$ and $\lambda_n \cdots \lambda_1 \to 0$ as $ n \to \infty$, then $F_n= f_n \circ \cdots \circ f_1$ satisfies
\begin{equation}\label{eq:newmixing1}
\left|\frac{|A \cap F_n^{-1}(E)|}{|E|} - \frac{|A|}{2\pi}\right| \le K(\lambda_n \cdots \lambda_1)^{\delta},
\end{equation}
for all arcs $A\subset \partial\D$ and measurable sets $E$ in $\partial \D$ with $|E|>0$, where $\delta=(6(1/a+1))^{-1}$ and $K=K(a)$ is a positive constant. Note that $\delta=1/(3(p+1))$, where~$p=p(a)=2/a+1$ is the exponent of the denominator in Theorem~B.

Here we follow the ingenious argument that Pommerenke used to deduce Theorem~\ref{thm:Pom3} from Lemma~\ref{lem:Pom3} though, because of the increased complexity, we give more detail at some points.

\begin{proof}[Proof of Theorem~A]
For $n\in \N$, we let
\[
r_n= 1- (\lambda_n \cdots \lambda_1)^{3\delta}\quad\text{and} \quad \varepsilon_n= (1-r_n)^{1/3}=(\lambda_n \cdots \lambda_1)^{\delta}.
\]
Then the estimate \eqref{eq:est2} from Theorem~B gives a positive constant~$K=K(a)$ such that, for $|z|=r_n$ and $n\ge 1$,
\begin{align}\label{Fnest}
|F_n(z)| &\leq K\lambda_n \cdots \lambda_1 |z|\frac{1}{(1-|z|)^p}\notag\\
&= \frac{K\lambda_n \cdots \lambda_1}{(\lambda_n \cdots \lambda_1)^{3\delta p}}\notag\\
&= K(\lambda_n \cdots \lambda_1)^{1/(p+1)}\notag\\
&= K(\lambda_n \cdots \lambda_1)^{3\delta} \to 0 \quad\text{as } n\to \infty,
\end{align}
since $\delta=1/(3(p+1))$.

The idea behind the proof is as follows. By \eqref{Fnest} and the case of equality in L\"owner's lemma, Lemma~\ref{lem:Lowner}, we deduce that for $|z|=r_n$ we have
\[
\omega(z, F_n^{-1}(E),\D) = \omega(F_n(z), E,\D) \simeq \omega(0, E, \D)=\frac{|E|}{2\pi}.
\]
Therefore $\omega(z, F_n^{-1}(E),\D)$ is essentially constant for $|z|=r_n$, so it is plausible that the set $F_n^{-1}(E)$ is distributed uniformly around $\partial\D$, with
\[
|F_n^{-1}(E)\cap A|\simeq  |F_n^{-1}(E)|\frac{|A|}{2\pi} = |E|\frac{|A|}{2\pi},
\]
for any arc $A\subset \partial\D$.

To make this idea precise, we need estimates for the Poisson kernel of $\D$,
\[
P(z,\zeta)=\frac{1-|z|^2}{|\zeta-z|^2},\quad z\in\D, \zeta\in \partial \D.
\]
On taking $z=r_ne^{i\phi}$, $\zeta=e^{i\theta}$ and $|\phi-\theta|\ge \epsilon_n$, we obtain:
\begin{equation}\label{eq:Pomm2.3}
P(r_ne^{i\phi},e^{i\theta})\le \frac{1-r_n^2}{|e^{i\varepsilon_n}-r_n|^2} \le \frac{2(1-r_n)}{\sin^2 \epsilon_n}=\frac{2\epsilon_n^3}{\sin^2 \epsilon_n}\le 3\epsilon_n, \quad\text{for } n\in\N,
\end{equation}
since $\lambda_n \cdots \lambda_1<1$ and $\sin \epsilon_n \ge 2\epsilon_n/\pi$.

Let $A$ be any arc of $\partial\D$ and, for $n\ge 1$, put
\[
A_n^+=\{e^{i\theta}:\min\{|\theta-\phi|:e^{i\phi}\in A\}\le \epsilon_n\},
\]
and
\[
A_n^-=\{e^{i\theta}\in A:\min\{|\theta-\phi|:e^{i\phi}\in \partial \D\setminus A\}\ge \epsilon_n\};
\]
if necessary take $A_n^+=\partial\D$ or $A_n^-=\emptyset$. Then consider the harmonic measures
\begin{equation}\label{eq:harm-meas}
\omega(z,A_n^\pm,\D)=\frac{1}{2\pi}\int_{A_n^\pm}P(z,\zeta)\,|d\zeta|.
\end{equation}
By \eqref{eq:Pomm2.3} and the definitions of $A_n^\pm$, we have
\begin{equation}\label{eq:Pomm2.5}
\omega(r_n\zeta,A_n^+,\D)\ge 1-3\varepsilon_n,\;\;\text{for }\zeta\in A\quad\text{and}\quad \omega(r_n\zeta,A_n^-,\D)\le 3\eps_n,\;\;\text{for }\zeta\in \partial\D \setminus A.
\end{equation}

Now recall that $F_n$, $n\ge 1,$ are inner functions such that $F_n(0)=0$ and put $E_n := F_n^{-1}(E)$. Then by Lemma~\ref{lem:Lowner} we have $|E_n|=|E|$ and $\omega(r_nz,E_n,\D)=\omega(F_n(r_n z),E,\D)$, so the following identity \cite[(2.9)]{pommerenke-ergodic} follows from \eqref{eq:harm-meas}:
\begin{align}\label{eq:identity}
\int_{E_n} \omega(r_n\zeta,A_n^{\pm},\D)\, |d\zeta| &= \int_{E_n}\left(\frac{1}{2\pi}\int_{A_n^\pm}\frac{1-r_n^2}{|z-r_n\zeta|^2}\,|dz|\right)|d\zeta|\notag\\
&= \int_{A_n^\pm}\left(\frac{1}{2\pi}\int_{E_n}\frac{1-r_n^2}{|\zeta-r_n z|^2}\,|d\zeta|\right)|dz|\notag\\
&= \int_{A_n^{\pm}} \left(\frac{1}{2\pi} \int_E \frac{1-|F_n(r_n z)|^2}{|\zeta -F_n(r_n z)|^2}\, |d\zeta| \right) |dz|.
\end{align}
From now on we assume~$n$ is so large that
\begin{equation}\label{eq:nlarge}
K(\lambda_n \cdots \lambda_1)^{3\delta}\le 1/2 \quad\text{and} \quad \eps_n= (\lambda_n \cdots \lambda_1)^{\delta}\le 1/6.
\end{equation}
The estimate \eqref{eq:newmixing1} automatically holds if either of the inequalities in \eqref{eq:nlarge} fails since the left-hand side of \eqref{eq:newmixing1} is at most~1.

Then
\[
|F_n(r_nz)|\le K(\lambda_n \cdots \lambda_1)^{3\delta}\le 1/2,\quad \text{for } |z|=1,
\]
by \eqref{Fnest}. Since
\[
\left|\frac{1-|z|^2}{|\zeta-z|^2}-1\right|\le 4|z|,\quad\text{for }|z|\le 1/2,
\]
we deduce that, for such~$n$ and any $|z|=1$,
\begin{align}
\left|\int_E \frac{1-|F_n(r_n z)|^2}{|\zeta -F_n(r_n z)|^2}\, |d\zeta| - |E| \right| &\leq 4|F_n(r_nz)|.|E|\notag\\
& \leq 4K(\lambda_n \cdots \lambda_1)^{3\delta}|E|.
\end{align}
Hence, by \eqref{eq:identity},
\begin{align}\label{eq:keyineq}
\left| \int_{E_n} \omega(r_n \zeta, A_n^{\pm}, \D)\,|d\zeta|- \frac{|A_n^\pm|}{2\pi}|E| \right|\notag
&= \left| \int_{A_n^{\pm}}\left(\frac{1}{2\pi} \int_E \frac{1-|F_n(r_n z)|^2}{|\zeta -F_n(r_n z)|^2} \,|d\zeta|- \frac{|E|}{2\pi}\right) |dz| \right|\notag\\
& \le K (\lambda_n \cdots \lambda_1)^{3\delta}|E|.|A_n^\pm|.
\end{align}
We deduce from \eqref{eq:keyineq} and \eqref{eq:Pomm2.5} that, for such~$n$,
\begin{align*}
(1-3\epsilon_n)|A\cap E_n| &\le \int_{A\cap E_n} \omega(r_n \zeta, A_n^+, \D)\,|d\zeta|\\
&\le \int_{E_n} \omega(r_n \zeta, A_n^+, \D)\,|d\zeta|\\
&\le \left(\frac{|A| + 2\varepsilon_n}{2\pi}\right)|E|\left(1+ K(\lambda_n \cdots \lambda_1)^{3\delta}\right)\\
&= \left(\frac{|A|}{2\pi} + \frac{\eps_n}{\pi}\right)|E|\left(1+ K\eps_n^3\right).
\end{align*}
Therefore, since $\eps_n= (\lambda_n \cdots \lambda_1)^{\delta}\le 1/6$, we obtain
\begin{align*}
\frac{|A\cap E_n|}{|E|}&\le (1+6\eps_n)\left(\frac{|A|}{2\pi} + \frac{\eps_n}{\pi}\right)\left(1+ K\eps_n^3\right)\\
&\le \frac{|A|}{2\pi}+(K/12+7)\epsilon_n\\
&= \frac{|A|}{2\pi}+ K_1\eps_n,
\end{align*}
where $K_1=K_1(a)$ is a positive constant. On the other hand, by \eqref{eq:Pomm2.5} again,
\begin{align*}
\int_{E_n}\omega(r_n \zeta, A_n^-, \D)|d\zeta| &= \int_{A\cap E_n}\omega(r_n \zeta, A_n^-, \D)|d\zeta| + \int_{E_n\setminus A}\omega(r_n \zeta, A_n^-, \D)|d\zeta|\\
&\le |A\cap E_n|+3\epsilon_n |E_n|\\
&= |A\cap E_n|+3(\lambda_n \cdots \lambda_1)^{\delta}|E|.
\end{align*}
Hence, by \eqref{eq:keyineq} again,
\begin{align*}
|A\cap E_n|&\ge \int_{E_n}\omega(r_n \zeta, A_n^-, \D)|d\zeta|-3\eps_n|E|\\
&\ge \frac{|A_n^-|}{2\pi} |E| -K\eps_n^3|E|.|A_n^-|-3\eps_n|E|\\
&= \frac{|A|-2\eps_n}{2\pi}-K\eps_n^3|E|.|A_n^-|-3\eps_n|E|\\
&= \frac{|A|}{2\pi}-\frac{\eps_n}{\pi}-K\eps_n^3|E|.|A_n^-|-3\eps_n|E|\\
&\ge \frac{|A|}{2\pi} - K_2\eps_n,
\end{align*}
where $K_2=K_2(a)$ is a positive constant. Hence
\[
\left| \frac{|A \cap E_n|}{|E|} - \frac{|A|}{2\pi} \right| \leq \max\{K_1,K_2\}\eps_n= \max\{K_1,K_2\}(\lambda_n \cdots \lambda_1)^{\delta}.
\]
This completes the proof of Theorem~A.
\end{proof}


\section{Entropy: Proofs of Theorems~C and~D}

Let us assume that the sequence of inner functions $(f_n)$ satisfies the conditions in Theorems~C and~D; that is, $f_n$ are centred inner functions with $\la_n=\vert f_n'(0)\vert \geq a>0$ and $\lambda_n \cdots \lambda_1 \to 0$ as $n \to \infty$, and put $\mu_n:=1-\la_n$ for n=1,2,\ldots. Then the sequence $(f_n)$ satisfies the hypotheses of Theorem~A, and we let $K=K(a)$ and $\delta=\delta(a)$ be the positive constants in Theorem~A. Also, put $F_n=f_n \circ \cdots \circ f_1$, for $n\ge 1$.

We shall first prove Theorem~C, part~(b). Although we shall not use it until the end of the proof, observe at this point that the hypothesis of Theorem~C, part~(b) implies that
\begin{equation} \label{eq:key}
 \sum_{k=1}^n \mu_k  \geq \tfrac12\mu n, \text{\ \ \ \ \ for infinitely many~$n$.}
\end{equation}

We now group the inner functions in blocks. Since the sum of the $\mu_k$'s is divergent and $0<\mu_k<1$ for all $k$, we can choose a sequence $(n_j)_{j\geq 0}$ such that  $n_0=0$ and
\begin{equation}\label{eq:blocks}
\mu_{n_{j-1}+1}+ \mu_{n_{j-1}+2} +\cdots+ \mu_{n_j} \in \left[\frac{\log 4K}{\delta}, \frac{\log 4K}{\delta}+1\right],\quad\text{for }j=1,2,\ldots.
\end{equation}
Then
\[
\delta  \log (\la_{{n_{j-1}+1}} \cdots \la_{n_j}) =\delta \log(1-\mu_{n_{j-1}+1})\cdots (1-\mu_{n_j})
\leq -\delta (\mu_{n_{j-1}+1}+\cdots+ \mu_{n_j}) \leq \log \frac{1}{4K};
\]
that is,
\begin{equation}\label{eq:14}
K \left(\la_{n_j} \cdots \la_{{n_{j-1}+1}}\right)^\delta \leq \frac14, \text{\ \ \ for $j=1,2,\ldots$.}
\end{equation}

We now define a new sequence $(g_j)$ corresponding to these blocks, as follows:
\[
g_j:=f_{n_j}\circ \cdots\circ f_{n_{j-1}+1},\quad j=1,2,\ldots;
\]
for example, $g_1=f_{n_1}\circ \cdots\circ f_1=F_{n_1}$.

We start with Theorem~C, part~(b). To show that the metric entropy is positive, it suffices to find one partition of $\partial\D$ for which
\[
\limsup_{n\to\infty}\frac{1}{n+1} \sum_{k_0=1}^N \ldots \sum_{k_n=1}^N
\vert A_{k_0\ldots k_n}\vert \log \frac{2\pi}{\vert A_{k_0\ldots k_n}\vert}>0,
\]
where $A_{k_0\ldots k_n}$ is defined by \eqref{eq:Ak0kn}. To this end, let us consider the partition defined by $A_1=\{z\in\partial \D : {\rm Im}(z)\geq0\}$ and $A_2=\{z\in\partial \D : {\rm Im}(z)<0\}$.  It follows by \eqref{eq:14} and Theorem~A (applied to the composition $g_j$) that, for $j\ge 1$, $k=1,2$, and for all measurable sets $E\subset \partial \D$ with $|E|>0$,
\begin{equation}\label{eq:mixtopent}
\left|\frac{|A_k \cap g_j^{-1}(E)|}{|E|} - \frac{|A_k|}{2\pi}\right| \le K(\la_{n_j} \cdots \la_{{n_{j-1}+1}})^{\delta} \leq \frac14,
\end{equation}
which, since $|A_k|=\pi$ for $k=1,2$, implies that
\begin{equation} \label{eq:ind}
|A_k \cap g_j^{-1}(E)| \leq \frac34 |E|,
\end{equation}
for all $j\geq 1$, $k=1,2$ and all measurable sets $E\subset\partial \D$ with $|E|>0$.

We now prove the following claim.
\begin{claim} \label{claim}
 For any $m\geq 0$ and $n\geq n_m$,
\[
\vert A_{k_0\ldots k_n}\vert \leq \pi \left(\frac34\right)^m,
\]
where $k_n\in\{1,2\}$ for $n=1,2,\ldots.$
\end{claim}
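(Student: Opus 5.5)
We argue by induction on $m$, using the single-block estimate \eqref{eq:ind}. For $m=0$ the bound is immediate: $A_{k_0\ldots k_n}\subset A_{k_0}$ and $|A_{k_0}|=\pi$. Also, since $A_{k_0\ldots k_n}\subset A_{k_0\ldots k_{n'}}$ whenever $n\ge n'$, it suffices to prove the claim for $n=n_m$ and then pass to $n\ge n_m$ by monotonicity.

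To pass from $m-1$ to $m$, the natural move is to factor off the \emph{first} block, since \eqref{eq:ind} is stated with an arc $A_k$ on the left. For $n\ge n_1$ we have $F_n=G\circ g_1$, where $G=f_n\circ\cdots\circ f_{n_1+1}$. Keeping only the intersections at times $0$ and $n_1,\dots,n$, we get
\[
A_{k_0\ldots k_n}\subset A_{k_0}\cap g_1^{-1}(E),\qquad E:=A_{k_{n_1}}\cap G_1^{-1}(A_{k_{n_1+1}})\cap\cdots\cap G_{n-n_1}^{-1}(A_{k_n}),
\]
where $G_j=f_{n_1+j}\circ\cdots\circ f_{n_1+1}$. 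The set $E$ is exactly an itinerary set of the form \eqref{eq:Ak0kn} for the shifted sequence $(f_{n_1+j})_{j\ge1}$, of length $n-n_1$.

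The shifted sequence satisfies the same hypotheses. In particular, its blocks are $n_j-n_1$, and \eqref{eq:14}, hence \eqref{eq:ind}, holds for its block maps $g_2,g_3,\dots$ with the same constant $3/4$. So it is cleanest to make the induction hypothesis uniform over all shifts. The statement to prove by induction on $m$ is: for every $s\ge0$ and every $n\ge n_{s+m}$, the itinerary set for $(f_{n_s+j})_{j\ge 1}$ up to time $n-n_s$ has measure at most $\pi(3/4)^m$.

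Granting this, the inductive step is as follows. If $|E|=0$ the bound is trivial. Otherwise \eqref{eq:ind} gives
\[
|A_{k_0\ldots k_n}|\le|A_{k_0}\cap g_1^{-1}(E)|\le\tfrac34|E|.
\]
Since $n-n_1\ge n_m-n_1$, which is the $(m-1)$-st block endpoint of the shifted sequence, the induction hypothesis gives $|E|\le\pi(3/4)^{m-1}$. Hence $|A_{k_0\ldots k_n}|\le\pi(3/4)^m$.

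The main care needed is the bookkeeping in two places. First, we must check that $E$ is genuinely an itinerary set for the shifted system, so that the induction applies. This uses $F_{n_1+j}=G_j\circ g_1$ together with $g_1^{-1}(X\cap Y)=g_1^{-1}X\cap g_1^{-1}Y$. Second, we must check that \eqref{eq:ind} applies with $g_1$ and an arbitrary measurable $E$; this is precisely what \eqref{eq:mixtopent} provides.
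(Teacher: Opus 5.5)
Your proof is correct and is essentially the paper's argument: both peel off the first block $g_1$ with the arc $A_{k_0}$ on the left using \eqref{eq:ind}, and then iterate. The only difference is bookkeeping. The paper first discards all constraints at times other than $0,n_1,\dots,n_m$, so its nested sets $E_j=A_{k_{n_j}}\cap g_{j+1}^{-1}(E_{j+1})$ involve only the block maps and no shift-uniform induction is needed; you instead keep the intermediate constraints and induct over the shifted systems.
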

\begin{proof}
Observe that, for any $m\geq 0$ and $n\geq n_m$,
\[
A_{k_0\ldots k_n} \subset A_{k_0\ldots k_{n_m}},
\]
and hence, for such $n$,
\begin{equation}\label{eq:ineq1}
\vert A_{k_0\ldots k_n}\vert \leq \vert A_{k_0\ldots k_{n_m}}\vert.
\end{equation}
Recall that,
\[
F_{n_m}=f_{n_m}\circ\cdots\circ f_2\circ f_1 =g_m\circ \cdots \circ g_1,\quad \text{for $m\geq 1$.}
\]
Then, since the itineraries for the blocks $(g_j)$ are less restrictive than those for $(f_n)$, we have
\[
\vert A_{k_0\ldots k_{n}}\vert = \vert A_{k_0}\cap F_1^{-1}(A_{k_1})\cap \cdots\cap F_n^{-1}(A_{k_n})\vert
\leq \vert A_{k_0}\cap F_{n_1}^{-1}(A_{k_{n_1}})\cap \cdots\cap F_{n_m}^{-1}(A_{k_{n_m}}) \vert.
\]
Now, to estimate the latter, let us write, using (\ref{eq:ind}),
\[
 \vert A_{k_0}\cap F_{n_1}^{-1}(A_{k_{n_1}})\cap \cdots\cap F_{n_m}^{-1}(A_{k_{n_m}})\vert
  = \vert A_{k_0} \cap g_1^{-1}(E_1)\vert \leq \frac34 \vert E_1\vert,
 \]
 where
 \[
 E_1= A_{k_{n_1}} \cap g_2^{-1}(A_{k_{n_2}}) \cap \cdots\cap (g_m\circ\cdots\circ g_2)^{-1}(A_{k_{n_m}}).
 \]
Proceeding inductively, we define sets $E_j=A_{k_{n_j}} \cap g_{j+1}^{-1}(E_{j+1})$, for $j=2,\ldots,m-1,$ and $E_m=A_{k_{n_m}}$.  By applying (\ref{eq:ind}) repeatedly we obtain
\[
\vert A_{k_0\ldots k_{n_m}}\vert=  \vert A_{k_0}\cap F_{n_1}^{-1}(A_{k_{n_1}})\cap \cdots\cap F_{n_m}^{-1}(A_{k_{n_m}})\vert \leq \left(\frac34\right)^m \vert E_m| =\pi \left(\frac34\right)^m.
 \]
The claim then follows from (\ref{eq:ineq1}).
\end{proof}

To prove the final estimate, we consider values of~$n$ such that \eqref{eq:key} holds and take $n_m$ such that $n_m\leq n< n_{m+1}$. From the Claim we obtain
\begin{equation} \label{eq:final}
\frac{1}{n+1} \sum_{k_0=1}^2 \ldots \sum_{k_n=1}^2 \vert A_{k_0\ldots k_n}\vert \log \frac{2\pi}{\vert A_{k_0\ldots k_n}\vert}
\geq \frac{1}{n_{m+1}+1}  \sum_{k_0=1}^2 \ldots \sum_{k_n=1}^2 \vert A_{k_0\ldots k_n}\vert
\left( \log \left(\frac43\right)^m +\log 2   \right).
\end{equation}
Now, the choice of~$n$, together with (\ref{eq:key}) and (\ref{eq:blocks}) imply that
\begin{equation}\label{eq:mnmmu}
\tfrac12\mu n_m \le\tfrac12\mu n\le \sum_{k=1}^{n}  \mu_k\leq \sum_{k=1}^{n_{m+1}}  \mu_k \leq (m+1) \left( \frac{\log 4K}{\delta} +1\right).
\end{equation}
So, using the fact that $ \sum_{k_0=1}^2 \ldots \sum_{k_n=1}^2
\vert A_{k_0\ldots k_n}\vert =2\pi$, the equations (\ref{eq:final}) and \eqref{eq:mnmmu} yield the following estimate. Here, after the first line, the limsups are taken along sequences of values of~$n$ and~$m$ where $n$ satisfies \eqref{eq:key} and $n_m\leq n < n_{m+1}$:
\begin{align*}
 h((f_n)) & =  \limsup_{n\to\infty} \frac{1}{n+1} \sum_{k_0=1}^2 \ldots \sum_{k_n=1}^2
\vert A_{k_0\ldots k_n}\vert \log \frac{2\pi}{\vert A_{k_0\ldots k_n}\vert}
 \\
& \ge \limsup_{m\to\infty} \frac{m}{n_{m+1}+1} \left(\sum_{k_0=1}^2 \ldots \sum_{k_n=1}^2
\vert A_{k_0\ldots k_n}\vert\right)  \log \frac43 \\
 & \geq \limsup_{m\to\infty} \frac{2\pi m}{\frac{m+2}{\mu/2}  \left( \frac{\log 4K}{\delta} +1\right) + 1} \log \frac43 \\
 & = \left(\frac{\pi\, \log(4/3) }{\frac{\log 4K}{\delta} +1}\right) \mu >0.
\end{align*}
This concludes the proof of Theorem C, part~(b), with $c=(\pi\, \log(4/3))/(\frac{\log 4K}{\delta} +1)$. Recall that $K=K(a)$ and $\delta=\delta(a)$ are the positive constants in Theorem~A.

The proof of part~(a) is similar, with \eqref{eq:key} replaced by
\begin{equation}\label{eq:keya}
\sum_{k=1}^n \mu_k  \geq \tfrac12\underline\mu n, \text{ for sufficiently large } n.
\end{equation}

The proof of Theorem~D builds on techniques developed in the proof of Theorem~C and is quite short. First, we use the same subsequence $(\mu_{n_j})$  defined by \eqref{eq:blocks}, and the corresponding functions $(f_{n_j})$ and blocking functions
\[g_j:=f_{n_j}\circ \cdots\circ f_{n_{j-1}+1}.\]
Also, we put $G_j=g_j\circ \cdots \circ g_1$, for $j=1,2,\ldots$.

Then for a given $\delta\in(0,1/2)$, we choose disjoint arcs $A_1=\{z\in\partial \D :{\rm Im}(z)\ge\delta\}$ and $A_2=\{z\in\partial \D : {\rm Im}(z)\le -\delta\}$, so that the distance from the arc $A_1$ to the arc $A_2$ is strictly greater than $\delta$.

It follows from \eqref{eq:mixtopent} that, for $j\ge 1$, $k=1,2$, and for all measurable sets $E\subset \partial \D$ with $|E|>0$, we have
\begin{equation}\label{eq:mixedgj}
\frac{|A_k \cap g_j^{-1}(E)|}{|E|}\ge \frac{|A_k|}{2\pi} - \frac14 = \frac{\pi-2\sin^{-1}\delta}{2\pi}-\frac14>0,
\end{equation}
since $\sin^{-1}\delta\le \pi\delta/2<\pi/4$.

Now let $m\in\N$. By applying \eqref{eq:mixedgj} in turn to $g_m$, $g_{m-1}$, \ldots, $g_1$ and the resulting preimage subsets of $A_1$ and $A_2$, we deduce that for any sequence $(k_0, k_1\ldots k_m)$, where $k_j\in\{1,2\}$, each set
\[
B_{k_0\ldots k_m}=A_{k_0}\cap G_1^{-1}(A_{k_1})\cap \cdots\cap G_m^{-1}(A_{k_m}),
\]
has positive measure. Note that $B_{k_0\ldots k_m}$ is the set of points in $\partial\D$ with initial itinerary $(k_0, k_1\ldots k_m)$, with respect to the pair $\{A_1,A_2\}$ and the forward composition sequence $g_j, j=1,\ldots,m$.

By the construction and the definitions of $A_1$ and $A_2$, we see that any pair of points in $\partial\D$ with distinct itineraries of this type with respect to $(g_j)$ must be $(m,\delta)$-separated. Since there are $2^{m+1}$ distinct itineraries of this type and each of the corresponding sets $B_{k_0\ldots k_m}$ is non-empty, we deduce that the sequence $(g_j)$ has an $(m,\delta)$-separated set consisting of at least $2^{m+1}$ points. Therefore the original forward iteration sequence $(f_{n})$, which was blocked to give the sequence $(g_j)$, has for each $m\in\N$ an $(n_m,\delta)$-separated set consisting of at least $2^{m+1}$ points. So, for the sequence $(f_n)$ we have $N(n_m,\delta)\ge 2^{m+1}$ and hence
\begin{align}
\limsup_{n\to\infty}\frac1n\log N(n,\delta)\ge &\limsup_{m\to\infty}\frac{1}{n_m}\log N(n_m,\delta)\\
&\ge \limsup_{m\to\infty}\frac{m+1}{n_m}\log 2\\
&\ge \frac{\log 2}{2\left( \frac{\log 4K}{\delta} +1\right)}\mu,
\end{align}
by \eqref{eq:mnmmu} in the proof of Theorem~C, part~(b).


\section{Topological entropy zero example: proof of Theorem~E}
Theorem~E states that if $(\lambda_n)$ is a sequence in $(0,1)$ such that $\sum_{n=1}^\infty (1-\lambda_n)<\infty$ and
\[
f_n(z):=z\frac{z+\lambda_n}{1+\lambda_n z}, \quad n \in\N,
\]
then the forward composition sequence $F_n:=f_n\circ\cdots\circ f_1$ has topological entropy zero on $\partial\D$.

First we need a lemma about the existence of points where $F_n=f_n\circ \cdots \circ f_1$ is convergent.
\begin{lem}\label{lem:cvgce}
There exists a number $\theta_0\in (0,\pi/2)$ such that
\begin{itemize}
\item[(a)]
for $0<\theta \le \theta_0$ and $n\ge 1$, $\arg F_n(e^{i\theta})\in (0,\pi)$ and is strictly increasing with $n$;
\item[(b)]
$F(e^{i\theta})=\lim_{n\to\infty} F_n(e^{i\theta})$ satisfies
\[
0 \le \arg F(e^{i\theta})\le \pi, \text{ for } 0\le \theta\le \theta_0, \quad\text{and}\quad F(e^{i\theta_0})=-1.
\]
\end{itemize}
\end{lem}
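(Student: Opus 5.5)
The plan is to work with lifts of arguments on the upper semicircle. For $t\in[0,\pi]$ write $f_n(e^{it})=e^{i\phi_n(t)}$ with
\[
\phi_n(t)=t+\beta_n(t),\qquad \tan\frac{\beta_n(t)}{2}=\frac{1-\lambda_n}{1+\lambda_n}\tan\frac t2 .
\]
This formula holds because $z\mapsto (z+\lambda_n)/(1+\lambda_n z)$ is a disc automorphism fixing $\pm1$ and preserving the upper semicircle. Its derivative at $1$ is $(1-\lambda_n)/(1+\lambda_n)<1$, so it acts on half-angles by multiplication of $\tan(\cdot/2)$ by this factor.

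Each $\phi_n$ is a continuous increasing homeomorphism $[0,\pi]\to[0,2\pi]$ with $\phi_n(t)>t$ for $t\in(0,\pi)$. Define the lifted arguments $T_0(\theta)=\theta$ and $T_n(\theta)=\phi_n(T_{n-1}(\theta))$, valid as long as $T_{n-1}(\theta)\le\pi$. Then:
\begin{itemize}
\item $T_n$ is continuous and increasing in $\theta$ on the set where it is defined;
\item $T_n(\theta)>T_{n-1}(\theta)$ whenever $T_{n-1}(\theta)\in(0,\pi)$, which is the monotonicity in (a).
\end{itemize}

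Next I would show that small $\theta$ never leave $(0,\pi)$. From the formula, $\tan(T_n/2)=\tan(T_{n-1}/2+\beta_n/2)$. While $T_{n-1}\le\pi/2$, say, one has $\beta_n\le C\mu_n\tan(T_{n-1}/2)$. This gives
\[
\tan(T_n/2)\le\tan(T_{n-1}/2)\,(1+C'\mu_n),
\]
hence
\[
\tan(T_n/2)\le \tan(\theta/2)\prod_k(1+C'\mu_k)=:M\tan(\theta/2).
\]
The product $M$ is finite because $\sum\mu_k<\infty$. So for $\theta$ small enough that $M\tan(\theta/2)<1$, all $T_n(\theta)$ stay in $(0,\pi/2)$. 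I then set
\[
\theta_0:=\sup\{\theta\in(0,\pi): T_n(\theta)<\pi \text{ for all } n\}.
\]
By monotonicity in $\theta$, (a) holds for all $0<\theta<\theta_0$. Continuity gives $T_n(\theta_0)\le\pi$ for all $n$. If some $T_n(\theta_0)=\pi$, then $T_{n+1}(\theta_0)=2\pi$, which is impossible for the supremum; so (a) also holds at $\theta_0$.

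For (b), the limit $T(\theta)=\lim_n T_n(\theta)\in(0,\pi]$ exists by monotone boundedness, and $T(0)=0$ trivially; so $F(e^{i\theta})=e^{iT(\theta)}$ has argument in $[0,\pi]$. The key step is $T(\theta_0)=\pi$, and I would argue by contradiction. Suppose $T(\theta_0)=\pi-2\eta<\pi$. Then all $T_n(\theta_0)\le\pi-2\eta$. On $[0,\pi-\eta]$ the derivatives satisfy
\[
1\le\phi_n'(t)\le 1+C(\eta)\mu_n,
\]
using the explicit formula for $\psi_n'$ computed in the proof of Theorem~\ref{thm:B2}, restricted to the circle away from $-1$. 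Hence, as long as orbits stay in $[0,\pi-\eta]$,
\[
|T_n(\theta)-T_n(\theta_0)|\le \prod_k(1+C(\eta)\mu_k)\,|\theta-\theta_0|=M(\eta)|\theta-\theta_0|,
\]
uniformly in $n$. An induction on $n$ then shows that for $\theta\in(\theta_0,\theta_0+\eta/M(\eta))$ every $T_n(\theta)$ stays below $\pi-\eta$, contradicting the definition of $\theta_0$. So $F(e^{i\theta_0})=-1$.

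I expect the main obstacle to be the requirement $\theta_0<\pi/2$. The construction above only yields $\theta_0\in(0,\pi)$, and when all $\lambda_n$ are very close to $1$ each $f_n$ is nearly the identity away from $-1$, so $\theta_0$ may be close to $\pi$. I would first try to obtain $\theta_0<\pi/2$ by a preliminary conjugation or relabelling, for example discarding finitely many initial maps, since this does not affect topological entropy. Failing that, I would check whether the later argument really uses more than $\theta_0\in(0,\pi)$. The uniform derivative control near $\theta_0$ is the technical heart of the proof and should go through as sketched.
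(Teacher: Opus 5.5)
Your proof is correct, and it is essentially the paper's argument written in angular coordinates rather than half-plane coordinates. The paper conjugates by $\alpha(z)=(1+z)/(1-z)$, so that the upper semicircle becomes the positive imaginary axis with $y=\cot(\theta/2)$ and $f_n$ acts as $y\mapsto y-\tfrac12(1-\lambda_n)(y+y^{-1})$. In that picture your $\theta_0=\sup$ is its $y_0=\inf E$, and your bound on $\tan(T_n/2)$ is its estimate $\Phi_n(y)\ge(\lambda_1\cdots\lambda_n)y$ for $y>(\prod_k\lambda_k)^{-1}$. Your uniform Lipschitz bound $\prod_k(1+C(\eta)\mu_k)$ plays the role of its tail estimate $\tfrac12C_0\sum_{k>N}(1-\lambda_k)$, which it applies after perturbing $y_0$ at a large time $N$.

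Your suspicion about $\theta_0<\pi/2$ is justified. That part of the statement is false in general, and the paper's proof does not establish it either; its bound $y_0\le(\prod_k\lambda_k)^{-1}$ only gives a lower bound on $\theta_0$. Indeed $\theta_0$ is forced by (a) and (b):
\begin{itemize}
\item any admissible $\theta_0'$ lies in your set, so $\theta_0'\le\theta_0$;
\item since your $\phi_n$ satisfy $\phi_n'\ge 1$, we get $T(\theta_0)-T(\theta_0')\ge\theta_0-\theta_0'$, so $T(\theta_0')=\pi$ forces $\theta_0'=\theta_0$.
\end{itemize}
On the other hand, $\beta_n(t)\le 2\mu_n\cot(\pi/8)<5\mu_n$ for $t\in[0,3\pi/4]$. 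So if $\sum_k\mu_k<0.15$, the orbit of $\pi/2$ stays below $\pi/2+0.75<3\pi/4$, and hence $\theta_0>\pi/2$.

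Discarding initial maps only decreases $\sum_k\mu_k$ and pushes $\theta_0$ towards $\pi$, so that fix cannot work. Your fallback is the right one: the proof of Theorem~E uses only $\theta_0\in(0,\pi)$, to define $A_0$ and obtain $A_n\subset A_{n+1}\subset\{e^{i\theta}:|\theta|<\pi\}$. That is exactly what you, and the paper, actually prove.
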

\begin{proof} We move the setting from~$\D$ to $\Hyp =\{z=x+iy:x>0\}$, via the mapping $\alpha(z)=(1+z)/(1-z)$. On $\partial \Hyp$  the estimates are simpler since they involve real intervals rather than circular arcs. The image under~$\alpha$ of the upper half of $\partial\D$ traversed anticlockwise is the positive imaginary axis traversed downwards. A calculation shows that $f_n$ in~$\D$ is conjugate to
\[
g_n(z)=(\alpha\circ f_n \circ \alpha^{-1})(z)=\tfrac12(1+\lambda_n)z+\tfrac12(1-\lambda_n)z^{-1},\quad z \in \Hyp.
\]
So, for $z=iy$, where $y>0$ we study the forward composition $\Phi_n=\phi_n\circ \cdots \circ \phi_1$, where
\[
\phi_n(y)=\tfrac12(1+\lambda_n)y-\tfrac12(1-\lambda_n)y^{-1}.
\]
The following three properties of $\phi_n$ will be used repeatedly.
First, notice that
 \begin{equation}\label{phinpositive}
\phi_n(y)>0 \quad \text{if and only if}\quad y>\sqrt{\frac{1-\lambda_n}{1+\lambda_n}}.
\end{equation}
Also, for $y>0$,
\begin{eqnarray}\label{Phinrecurrence}
\phi_n(y)= y - \tfrac12(1-\lambda_{n})(y+y^{-1}) \leq y,
\end{eqnarray}
and finally,
 \begin{equation}\label{phininduction}
\phi_n(y)\geq  \lambda_n y  \quad \text{if and only if}\quad y\geq 1,
\end{equation}
since we can write $\phi_n(y)=\lambda_n y + \frac12 (1-\lambda_n)(y-y^{-1})$, and $y-y^{-1}\geq 0$ if and only if $y\geq 1$.

Starting with $y\geq 0$, it follows from \eqref{Phinrecurrence} that  the sequence $(\Phi_n(y))=(\phi_n(\Phi_{n-1}(y))$ is strictly decreasing as long as the sequence remains in $(0,\infty)$. We now define
\[
E:=\{y\in (0,\infty): \Phi_n(y)>0,\text{ for } n\ge 1\}\quad \text{and}\quad y_0=\inf E.
\]
Then $y_0\ge 0$.

Now suppose that $y>\left(\prod_{n=1}^\infty \lambda_n\right)^{-1} > 1$. We shall show by induction that
\[
\Phi_n(y)\ge \left(\prod_{k=1}^n \la_k\right)y,\quad\text{for } n\ge 1.
\]
This inequality holds for $n=1$ by \eqref{phininduction} since $y\ge 1$.
If it holds for some $n\ge 1$, and therefore $\Phi_n(y)\geq 1$, then, by \eqref{phininduction} again,
\[
\Phi_{n+1}(y) =\phi_{n+1}(\Phi_n(y)) \geq \la_{n+1} \Phi_n(y)\geq \la_{n+1} \left(\prod_{k=1}^n \lambda_k\right)y =
 \left(\prod_{k=1}^{n+1} \lambda_k\right)y,
\]
as required. It follows that~$E$ contains the interval $(\left(\prod_{n=1}^\infty \lambda_n\right)^{-1},\infty)$, so $y_0\le \left(\prod_{n=1}^\infty \lambda_n\right)^{-1}$.

Next, we observe that~$E$ is an interval, clearly an unbounded one. This holds because each function $\phi_n$ is increasing on $(0,\infty)$ and in particular on the interval $\left(\sqrt{(1-\lambda_n)/(1+\lambda_n)},\infty\right)$ where $\phi_n$ is positive (by \eqref{phinpositive}), so
\[
\Phi_n(y) <\Phi_n(y'), \quad \text{if } y<y' \text{ and } y\in E.
\]
To complete the proof, we need to show that $\lim_{n\to \infty} \Phi_n(y_0)=0$. It is clear by the continuity of the functions $\phi_n$ on $(0,\infty)$ and the fact that~$E$ is an interval that $\Phi_n(y_0)> 0$ for all~$n\ge 1$. Thus $\lim_{n\to \infty} \Phi_n(y_0)$ exists and is at least~$0$. We can now define
\[
\Phi(y):=\lim_{n\to\infty}\Phi_n(y),\quad\text{for } y\ge y_0.
\]

Suppose for a contradiction that $\Phi(y_0)>0$. To obtain a contradiction, we shall deduce from this that there must exist $y\in E$ such that $0<y<y_0$. First choose~$N$ so large that
\begin{equation}\label{PhiN-ineq1}
\sqrt{\frac{1-\lambda_n}{1+\lambda_n}}<\tfrac12\Phi(y_0), \quad\text{for } n\ge N,
\end{equation}
\begin{equation}\label{PhiN-ineq2}
\Phi(y_0)<\Phi_N(y_0)<2\Phi(y_0),
\end{equation}
and
\begin{equation}\label{PhiN-ineq3}
\tfrac12C_0\left(\sum_{k=N+1}^\infty (1-\lambda_k)\right)<\tfrac12\Phi(y_0),
\end{equation}
where
\[
C_0=\max\{t+1/t: \tfrac12\Phi(y_0)\le t\le 2\Phi(y_0)\}>0.
\]
By the continuity of $\Phi_N$, there exists $y\in(0,y_0)$ such that
\begin{equation}\label{eq:y-props}
\Phi(y_0)<\Phi_N(y)<\Phi_N(y_0).
\end{equation}
We claim that $\Phi_n(y)>\frac12\Phi(y_0)$ for $n>N$, which contradicts the definition of $y_0=\inf E$. Assuming that this inequality holds for some $n\ge N$, we deduce by \eqref{phinpositive}, \eqref{Phinrecurrence}, \eqref{PhiN-ineq1}, \eqref{PhiN-ineq2}, \eqref{PhiN-ineq3} and \eqref{eq:y-props}, that
\begin{align*}
\Phi_{n+1}(y)&=\Phi_N(y)-(\Phi_N(y)-\Phi_{N+1}(y)) - \cdots -(\Phi_{n}(y)-\Phi_{n+1}(y))\\
&>\Phi(y_0)-\tfrac12C_0\sum_{k=N+1}^{n+1}(1-\lambda_k) >\tfrac12 \Phi(y_0),
\end{align*}
which proves the claim, by induction (note that all the quantities $\Phi_n(y)$ on the right of the above equation for $\Phi_{n+1}(y)$ lie in the interval $\left[\tfrac12\Phi(y_0), 2\Phi(y_0)\right]$). This completes the proof of the lemma.
\end{proof}
Next we note that the maps $f_n$ are all locally expanding on $\partial\D$, though by varying amounts; the estimate in Lemma~\ref{lem:expanding} follows from \cite[Proposition~1]{Martin}, for example.
\begin{lem}\label{lem:expanding}
With the functions $f_n$ defined as in the statement of Theorem~D, we have
\[
|f'_n(\zeta)|\ge f'_n(1)= \frac{2}{1+\lambda_n}>1,\quad\text{for }\zeta\in\partial\D.
\]
\end{lem}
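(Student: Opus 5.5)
We must show $|f_n'(\zeta)|\ge 2/(1+\lambda_n)$ on $\partial\D$ for the Blaschke product $f_n(z)=z\,\frac{z+\lambda_n}{1+\lambda_n z}$. The plan is to compute the derivative modulus on the circle explicitly using the logarithmic derivative of a finite Blaschke product. For $\zeta\in\partial\D$ and a Blaschke product $B(z)=\prod_j \frac{z-a_j}{1-\bar a_j z}$, one has the standard identity
\[
|B'(\zeta)|=\sum_j \frac{1-|a_j|^2}{|\zeta-a_j|^2},\qquad \zeta\in\partial\D,
\]
which I would either quote or derive quickly: $\zeta B'(\zeta)/B(\zeta)=\sum_j \zeta\frac{d}{dz}\log\frac{z-a_j}{1-\bar a_jz}\big|_{z=\zeta}$, and each term equals $\frac{1-|a_j|^2}{|\zeta-a_j|^2}>0$ when $|\zeta|=1$. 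Taking moduli, and using $|B(\zeta)|=|\zeta|=1$, gives the sum.

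Apply this with zeros $a_1=0$, $a_2=-\lambda_n$. Then, for $\zeta=e^{i\theta}$,
\[
|f_n'(\zeta)|=1+\frac{1-\lambda_n^2}{|\zeta+\lambda_n|^2}=1+\frac{1-\lambda_n^2}{1+2\lambda_n\cos\theta+\lambda_n^2}.
\]
The second term is minimised when $\cos\theta=1$, that is, at $\zeta=1$, where it equals $\frac{1-\lambda_n^2}{(1+\lambda_n)^2}=\frac{1-\lambda_n}{1+\lambda_n}$. Hence $|f_n'(\zeta)|\ge 1+\frac{1-\lambda_n}{1+\lambda_n}=\frac{2}{1+\lambda_n}$, and this agrees with the direct computation $f_n'(1)=\frac{\lambda_n+2+\lambda_n}{(1+\lambda_n)^2}=\frac{2}{1+\lambda_n}$ already noted in the proof of Theorem~\ref{thm:B2} (the formula for $\psi_m'$). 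Finally $2/(1+\lambda_n)>1$ since $\lambda_n<1$.

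There is no real obstacle; the only step requiring care is the sign/positivity in the logarithmic derivative identity on $\partial\D$, which I would verify by writing $\zeta\overline{\zeta}=1$ so that $\frac{\zeta}{\zeta-a}+\frac{\bar a\zeta}{1-\bar a\zeta}=\frac{1-|a|^2}{|\zeta-a|^2}$. Alternatively one can bypass it entirely by citing \cite[Proposition~1]{Martin}, as the paper indicates.
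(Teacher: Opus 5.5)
Your proof is correct: the formula $|B'(\zeta)|=\sum_j (1-|a_j|^2)/|\zeta-a_j|^2$ on $\partial\D$, applied with zeros $0$ and $-\lambda_n$, gives $|f_n'(e^{i\theta})|=1+(1-\lambda_n^2)/(1+2\lambda_n\cos\theta+\lambda_n^2)$, which is minimised at $\theta=0$. You also rightly took the $f_n$ to be those of Theorem~E, since the reference to Theorem~D in the statement is a slip. The paper gives no argument beyond citing \cite[Proposition~1]{Martin}, and your computation is a self-contained version of that same standard estimate.
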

We say that a finite set of points on $\partial\D$ is $\delta$-\textit{separated}, where $\delta>0$, if all pairs of distinct points in the set lie at least~$\delta$ apart, where the distance is measured along $\partial\D$. Clearly any set of points in $\partial \D$ that is $\delta$-separated contains at most $2\pi/\delta$ points.

Recall from Section~1 that a set $S\subset \partial\D$ is $(n,\delta)$-separated, where $n\ge 0$ and $\delta > 0$, for $F_n=f_n\circ \cdots\circ f_1$ if any two distinct points $\zeta, \zeta' \in S$ satisfy
\begin{equation}\label{eq:maxdist}
\max_{0\le m\le n}|F_m(\zeta)-F_m(\zeta')| \ge \delta.
\end{equation}
Also recall that the quantity $N(n,\delta)$, where $n\ge 0, \delta>0$, denotes the maximum cardinality of any $(n,\delta)$-separated set for the system $(f_n)$. To prove Theorem~E, we shall estimate this quantity from above and show that, for all $\delta>0$ we have
\[
\limsup_{n\to\infty}\frac1n \log N(n,\delta) =0,
\]
from which it follows that $(f_n)$ has topological entropy zero.

First, we describe sequences of sets that will play key roles in our calculations. Let $\theta_0$ be as in Lemma~\ref{lem:cvgce}, put $A_0=\{e^{i\theta}:|\theta|\le \theta_0\}$ and, for $n\ge 1$, put
\[
A_n=F_n(A_0)\quad\text{and}\quad B_n = \partial \D\setminus A_n.
\]
The behaviour of each $f_n$ (and so $F_n$) on the lower half of $\partial\D$ is obtained by reflection from that of $f_n$ on the upper half. Therefore, by Lemma~\ref{lem:cvgce},
\begin{equation}\label{eq:AnBnprops}
A_n\subset A_{n+1}\subset \{e^{i\theta}:|\theta|<\pi\}, \text{ for } n\ge 0, \quad \bigcup_{n=0}^\infty A_n =\partial\D \setminus \{-1\},\quad \bigcap_{n=0}^\infty B_n=\{-1\}.
\end{equation}
Also, for $n=1,2,\ldots,$ we have
\[
B_{n-1} \supset B_n\quad\text{and}\quad f_n^{-1}(B_n)\subset B_{n-1},\text{ since } f_n(A_{n-1})=A_n.
\]

Therefore, all orbits of points in $F_n^{-1}(A_n)$ must have initial itineraries under $F_k$, $k=0,1\ldots,n,$ with respect to the pairs $(A_k,B_k)$, $k=0,1,\ldots,n,$ which are illustrated in Figure~1, with one of the following forms
\begin{equation}\label{eq:itin1}
A_0, A_1, \ldots, A_n,
\end{equation}
\begin{equation}\label{eq:itin2}
B_0,B_1, \ldots B_{m-1}, A_m, \ldots, A_n,\quad\text{where } 1\le m\le n,
\end{equation}
or
\begin{equation}\label{eq:itin3}
B_0, B_1, \ldots, B_n.
\end{equation}

\begin{figure}[hbt!]
\centering
\begin{tikzpicture}[scale=1]
\tikzset{centerdot/.style={circle, fill=black, inner sep=0.6pt}}

\def\R{1.2}

\begin{scope}[shift={(0,0)}]
  \draw (0,0) circle (\R);
  \node[centerdot] at (0,0) {};

  \draw[line width=2pt, blue] (-45:\R) arc (-45:45:\R);



  \node[scale=1] at (0:1.5) {$A_0$};
  \node[scale=1] at (180:1.5) {$B_0$};

\end{scope}

\begin{scope}[shift={(3.6,0)}]
  \draw (0,0) circle (\R);
  \node[centerdot] at (0,0) {};

  \draw[line width=2pt, blue] (-75:\R) arc (-75:75:\R);



  \node[scale=1] at (0:1.5) {$A_1$};
   \node[scale=1] at (180:1.5) {$B_1$};
\end{scope}

\begin{scope}[shift={(8.5,0)}]
  \draw (0,0) circle (\R);
  \node[centerdot] at (0,0) {};

  \draw[line width=2pt, blue] (-110:\R) arc (-110:110:\R);


  \node[scale=1] at (0:1.7) {$A_{n-1}$};
  \node[scale=1] at (180:1.7) {$B_{n-1}$};
\end{scope}

\begin{scope}[shift={(12.5,0)}]
  \draw (0,0) circle (\R);
  \node[centerdot] at (0,0) {};

  \draw[line width=2pt, blue] (-150:\R) arc (-150:150:\R);

  \node[scale=1] at (0:1.55) {$A_n$};
  \node[scale=1] at (180:1.5) {$B_n$};
\end{scope}

\node at (5.9,0) {\Large $\cdots$};

\tikzset{
  map/.style={
    -{Stealth[length=1.2mm,width=0.9mm]},
    line width=0.55pt,
    shorten >=1.6pt,
    shorten <=1.6pt
  }
}

\def\y{-1.00}
\def\dy{0.12}

\newcommand{\tinyarrow}[3]{
  \draw[map] (#1,\y) .. controls ({(#1+#2)/2}, {\y-\dy}) .. (#2,\y)
    node[midway, below, yshift=-2pt] {#3};
}

\tinyarrow{1.30}{1.90}{$f_1$}

\tinyarrow{4.7}{5.3}{$f_2$}

\tinyarrow{6.70}{7.3}{$f_{n-1}$}

\tinyarrow{10.0}{10.6}{$f_n$}
\end{tikzpicture}
\caption{\small The sets $A_k$ (in blue) and their complements $B_k$, for $k=0,1,\ldots, n$}
\end{figure}


%
%

Now, let $\delta>0$ be fixed. By \eqref{eq:AnBnprops} we can take $K(\delta)\in\N$ so large that
\begin{equation}\label{eq:Bn}
|B_k|\le \tfrac12\delta,\quad \text{for } k \ge K(\delta).
\end{equation}

For $n\in\N$, we let~$S_n\subset\partial\D$ be any $(n,\delta)$-separated set for $(f_n)$. We will show that the cardinality of $S_n$ is at most
\begin{equation}\label{eq:finalN}
2^{4K(\delta)-2}\left(\frac{\pi}{\delta}\right)^{K(\delta)} (n+2).
\end{equation}
This expression is an upper bound for $N(n,\delta)$ and hence, for all $\delta>0$ we have
\[
\limsup_{n\to\infty}\frac1n \log N(n,\delta) =0,
\]
which proves Theorem~E.

To obtain this upper bound \eqref{eq:finalN} for $N(n,\delta)$, we consider various disjoint subsets of $S_n$, to be precise $n+2$ subsets in all, and obtain upper bounds for the cardinality of each of them. We define
\[
S_n(A_0)=\{\zeta\in S_n: F_k(\zeta)\in A_k, \text{ for } k=0,1,\ldots, n\}
\]
and, for $m=1,\ldots, n$,
\[
S_n(A_m):=\{\zeta\in S_n:F_k(\zeta) \in B_k, \text{ for } k=0,1,\ldots, m-1, \text{ and } F_m(\zeta)\in A_m\}.
\]
It is clear that every point of $S_n\cap F_n^{-1}(A_n)$ lies in exactly one of the sets $S_n(A_m), 0\le m\le n$. We also define
\[
S_n(B_n):=\{\zeta\in S_n:F_k(\zeta) \in B_k, \text{ for } k=0,1,\ldots, n\}.
\]

For $m=0,1,\ldots, n$, we let $N_n(m)$ denote the maximal cardinality of any $(n,\delta)$-separated set of points taken from $S_n(A_m)$. Since $f_k$ maps $A_{k-1}$ injectively onto $A_k$, for $k=1,\ldots, n$, and is expanding there, we have $N_n(0)\le 2\pi/\delta$.

To bound $N_n(m)$, for $m=1,\ldots,n$, we suppose that a set~$T$ of $(n,\delta)$-separated set of points taken from $S_n(A_m)$ has~$N$ points and show that~$N$ cannot be too large. Since~$T$ has~$N$ points, there exists at least one arc of $\partial\D$ that is of length at most $\tfrac12 \delta$ and contains a subset of at least $\delta/(4\pi)N$ points from~$T$; call this subset~$T_0$. We assume that~$N$ is so large that  $\delta/(4\pi)N$ is at least~1 and that the same property holds for all the lower bounds for  cardinalities of sets that follow.

Since $f_1$ has degree~2, we deduce that $f_1(T_0)$ has at least $\tfrac12 \delta/(4\pi)N$ distinct points. We can therefore choose a subset $T'_1$ of $f_1(T_0)$ that contains at least $\tfrac12 (\delta/(4\pi))^2 N$ points and lies entirely in an arc of $\partial\D$ of length $\tfrac12\delta$. Let $T_1=T_0 \cap f_1^{-1}(T'_1)$. Then $T_1\subset T_0$, and $T_1$ contains at least $\tfrac12 (\delta/(4\pi))^2 N$ points.

Since $F_2=f_2\circ f_1$ has degree~4, we deduce that $F_2(T_1)$ has at least $(\tfrac12)^3 (\delta/(4\pi))^2 N$ distinct points. We can therefore choose a subset $T'_2$ of $F_2(T_1)$ that contains at least $(\tfrac12)^3 (\delta/(4\pi))^3 N$ points and lies entirely in an arc of $\partial\D$ of length $\tfrac12\delta$. Let $T_2=T_1 \cap F_2^{-1}(T'_2)$. Then $T_2\subset T_1\subset T_0$, and $T_2$ contains at least $(\tfrac12)^3 (\delta/(4\pi))^3 N$ points.

Continuing in this way, for $1\le k\le \min\{m-1,K(\delta)-1\}$, we obtain sets $T_k$ such that $T_{k}\subset T_{k-1}\subset S_n(A_m)$,
\begin{equation}\label{eq:cardTk}
T_k \text{ has at least } (\tfrac12)^{2k-1} (\delta/(4\pi))^{k+1}N \text{ points,}
\end{equation}
and
\[
|F_k(\zeta)-F_k(\zeta')|\le \tfrac12 \delta,\quad \text{for }\zeta,\zeta'\in T_{k}.
\]
Now suppose that $1\le m\le K(\delta)$. Then, we have
\begin{equation}\label{eq:orbitsclose1}
\max\{|F_k(\zeta)-F_k(\zeta')|:0\le k\le m-1\}\le \tfrac12 \delta,\quad \text{for }\zeta,\zeta'\in T_{m-1}.
\end{equation}
Recall that $T_{m-1}\subset T\subset S_n(m)\subset S_n$, so $T_{m-1}$ is an $(n,\delta)$-separated set of points and $F_m(T_{m-1})\subset A_m$. Since $f_k$ maps $A_{k-1}$ injectively onto $A_k$, for $k=1,\ldots, n$, and is expanding there, it follows from \eqref{eq:orbitsclose1} that
\[
|F_n(\zeta)-F_n(\zeta')| \ge \delta, \quad \text{for }\zeta,\zeta'\in T_{m-1};
\]
that is, the points $F_n(\zeta),$ where $\zeta\in T_{m-1},$ are distinct and form a $\delta$-separated set. However, any $\delta$-separated set has at most $2\pi/\delta$ elements, so we deduce, from \eqref{eq:cardTk} with $k=m-1$, that
\[
\left(\frac12\right)^{2m-3} \left(\frac{\delta}{4\pi}\right)^{m}N\le\frac{2\pi}{\delta},
\]
giving the bound
\[
N\le 2^{4m-2}\left(\frac{\pi}{\delta}\right)^{m+1} \le 2^{4K(\delta)-2}\left(\frac{\pi}{\delta}\right)^{K(\delta)},\quad\text{for }1\le m\le K(\delta)-1.
\]
On the other hand, if $m\ge K(\delta)$, then $T_{K(\delta)-1}\subset S_n(A_m)$ and so, by the construction of the sets $T_k$ and \eqref{eq:Bn}, we deduce that
\[
\max\{|F_k(\zeta)-F_k(\zeta')|:0\le k\le m-1\}\le \tfrac12 \delta,\quad \text{for }\zeta,\zeta'\in T_{K(\delta)-1}.
\]
Arguing as above, we deduce that
\[
N\le 2^{4K(\delta)-2}\left(\frac{\pi}{\delta}\right)^{K(\delta)},\quad\text{for } K(\delta) \le m\le n.
\]
Therefore, whenever $1\le m\le n$, we have
\begin{equation}\label{eq:Nnm}
N_n(m)\le 2^{4K(\delta)-2}\left(\frac{\pi}{\delta}\right)^{K(\delta)}.
\end{equation}
Finally, the maximum cardinality of any  $(n,\delta)$-separated set of points in $S_n(B_n)$ can be shown to satisfy this same upper bound by a similar construction of nested subsets $T_k$, $0\le k\le K(\delta)-1$, of $S_n(B_n)$.

Combining these results we deduce that the maximum cardinality of $S_n$ is at most
\[
2^{4K(\delta)-2}\left(\frac{\pi}{\delta}\right)^{K(\delta)} (n+2),
\]
which gives \eqref{eq:finalN}. This completes the proof of Theorem~E.

%
%

\bibliography{Wandering2-ent}
\end{document}